\newcounter{contador}
\newcounter{teoA}
\newtheorem{propo}[contador]{Proposition}
\newtheorem{teo}[contador]{Theorem}
\newtheorem{lem}[contador]{Lemma}
\newtheorem{defi}[contador]{Definition}
\theoremstyle{remark}
\newtheorem{nota}[contador]{Remark}
\newcounter{ex}
\newcommand{\R}{{\mathbb R}}
\newcommand{\C}{{\mathbb C}}
\newcommand{\N}{{\mathbb N}}
\newcommand{\U}{{\cal{U}}}
\title{Bifurcation of 2-periodic orbits\\ from non-hyperbolic fixed points
\footnote{The authors are supported by
Ministry of Economy, Industry and Competitiveness of the Spanish
Government through grants MINECO/FEDER MTM2016-77278-P  (first and
second authors) and DPI2016-77407-P (AEI/FEDER, UE, third author). The first  and
second authors are also supported by the grant 2014-SGR-568  from
AGAUR, Generalitat de Catalunya. The third author is supported by
the grant 2014-SGR-859 from AGAUR, Generalitat de Catalunya.}}
\author{Anna Cima$^{(1)}$, Armengol Gasull$^{(1)}$ and V\'{\i}ctor Ma\~{n}osa $^{2}$
  \\*[.1truecm]
{\small \textsl{$^{(1)}$ Departament de Matem\`{a}tiques, Facultat
de Ci\`{e}ncies,}}
\\*[-.25truecm] {\small \textsl{Universitat Aut\`{o}noma de Barcelona,}}
\\*[-.25truecm] {\small \textsl{08193 Bellaterra, Barcelona, Spain}}
\\*[-.25truecm] {\small \textsl{cima@mat.uab.cat,
gasull@mat.uab.cat}}\\
\\*[-.25truecm] {\small \textsl{$^{2}$ Departament de Matem\`{a}tiques,}}
\\*[-.25truecm] {\small \textsl{Universitat Polit\`{e}cnica de Catalunya}}
\\*[-.25truecm] {\small \textsl{Colom 1, 08222 Terrassa, Spain}}
\\*[-.25truecm] {\small \textsl{victor.manosa@upc.edu}}}
\begin{document}

\maketitle
\begin{abstract}
We introduce the concept of $2$-cyclicity for families of
one-dimensional maps with a non-hyperbolic fixed point by analogy
to the  cyclicity for families of planar vector fields with a weak
focus. This new concept is useful in order to study the number of
$2$-periodic orbits that can bifurcate from the fixed point. As an
application we study  the $2$-cyclicity of some natural families of
polynomial maps.
\end{abstract}

\noindent {\sl  Mathematics Subject Classification 2010:}
37C05, 37C25, 37C75, 39A30.

\noindent {\sl Keywords:} non-hyperbolic fixed point, two periodic
points, bifurcation, cyclicity.


\section{Introduction}\label{S-intro}

The cyclicity of a family of vector fields having a  weak focus or a
center is a well known concept in the theory of planar vector fields
and the problems surrounding the second part of the Hilbert's 16th
problem  \cite{Il,R}. A grosso modo the cyclicity expresses the
maximum number of small amplitude limit cycles that can effectively
bifurcate from the singular point by varying the parameters in the
family of considered vector fields.

This cyclicity  is given by the number of fixed points near the
critical point of a family of orientation preserving maps (the so
called return maps)  with a non-hyperbolic fixed point. As we will
see, the cyclicity also can be seen as the number of 2-periodic
orbits of a related family of orientation reversing maps (the
half-return maps), see for instance \cite{CGP} or Section
\ref{SS-bifurcation}. Recall that given a map $f:\R\to\R$, a
2-periodic orbit is a set $\{x,y\}$ such that $f(x)=y, f(y)=x$ and
$x\ne y.$

Hence it is natural, in the discrete setting, to study the
bifurcation of  $2$-periodic orbits from non-hyperbolic fixed points
of orientation reversing one-dimensional  analytic diffeomorphisms
of the form
\begin{equation}\label{E-maps}
f(x)=f_a(x)=-x+\sum\limits_{j\geq 2} a_j x^j.
\end{equation}
This will be the main goal of this paper.

To fix the problem  we start introducing the concept of
\emph{$2$-cyclicity} of a family of maps of the form \eqref{E-maps},
by analogy with the concept of cyclicity for planar vector fields.
Here, given $x\in\R^m$ and $\rho\in\R^+,$
$D_\rho(x):=\{y\in\R^m\,:\, ||y-x||<\rho\}.$

\begin{defi}\label{D-cycl}
Set $a=(a_{1},\ldots,a_{n})$ varying in an open set of
$\mathcal{V}\subseteq\R^{n}$, and consider the family of analytic reversing
orientation maps from $\R$ into itself,
\begin{equation}\label{E-mapsaestrella}
f_{a}(x)=-x+\sum_{i\geq 2} c_{i}(a) x^i.
\end{equation} We will say that the origin of a map $f_{a^*}$, with  $a^*\in\mathcal{V}$,
has $2$\emph{-cyclicity} $N\in\mathbb{N}\cup\{0\}$ if:
\begin{enumerate}
  \item[(i)] it is possible to find $\varepsilon_0>0$ and
  $\delta_0>0$ such that the maximum of  isolated
   $2$-periodic orbits within $D_{\delta_0}(0)\subset
  \R$ for every map \eqref{E-mapsaestrella} with $a\in D_{\varepsilon_0}(a^*)\subset \mathcal{V}$ is
$N.$

  \item[(ii)] for any $\varepsilon>0$ and any  $\delta>0$
  there exists $a\in D_{\varepsilon}(a^*)\subset\mathcal{V}$
  such that $f_a$ has $N$ different  isolated $2$-periodic orbits within
  $D_{\delta}(0).$
\end{enumerate}
A family of maps $f_a$,  with $a\in\mathcal{V}\subseteq\R^{n}$, has
$2$-cyclicity $N$ at the origin if $N$ is the maximum $2$-cyclicity
achieved by a map in the family.
\end{defi}

We remark that it has no sense to study the 2-cyclicity for locally
orientation preserving diffeomorphisms because it is always 0, see
Remark \ref{P-cic0op}.

 In the
following, for the sake of simplicity,  we will simply say
\emph{cyclicity} to refer to {\it $2$-cyclicity} of a map or a
family of maps at the origin.

\medskip

We remark that the cyclicity of a family \eqref{E-mapsaestrella}
does not depend only on the number $n$ of parameters but on their
role. As an example, we will show in Section \ref{SS-bifurcation}
that there exist one-parametric families ($n=1$) of maps  with
arbitrary large cyclicity.

\medskip

 In the recent paper \cite{CGM17}, we have introduced what we
 call \emph{stability constants}  to study the stability of the origin of one-dimensional
 maps of the form
 \eqref{E-mapsaestrella} and also of  periodic discrete dynamical
  systems with a common fixed point. A summary of results on this issue can also be
found in \cite{DEP}. The analysis of these constants plays also an
important role in the study of the cyclicity, as the proof of our
main result of this paper evidences. Let us recall them.

To know  the local stability of the origin of an analytic map of the
form \eqref{E-maps}, we consider
\begin{equation}\label{E-Ws}
f\circ f(x):=f(f(x))=x+\sum\limits_{j\geq 3} W_j(a_2,\ldots,a_j)
x^j.
\end{equation}

If $f$ is not an involution (i.e. $f\circ f\neq\mathrm{Id}$), we
define a \emph{stability constant of order} $k$ (with $k\geq 3$) as
\begin{equation}\label{E-stab-cons}
    \begin{array}{l}
      V_3=V_3(a)=W_3(a_2,a_3) \,\hbox{ if }\, W_3\neq 0,\, \hbox{ or}\\
      V_k=V_k(a)=W_k(a_2,\ldots,a_k) \,\hbox{ if }\, W_j=0,\,j=3,\ldots,k-1.
    \end{array}
\end{equation}
Notice that the stability constant $V_k$ only has sense when all the
previous $W_j, j<k$ vanish. Hence, any expression of the form
$W_k+U,$ where $U$ belongs to the ideal generated by
$W_3,W_4,\ldots, W_{k-1},$  $\mathcal{I}_{k-1}:=\langle
W_3,W_4,\ldots, W_{k-1}\rangle$, is a valid expression for $V_k.$ In
this work we will refer the expressions of the polynomials $W_k$ as
stability constants,
 but also we will consider the expressions $V_k$ as the normal forms of $W_k$ in the Gr\"{o}bner basis
of $\mathcal{I}_k$ when the graded reverse lexicographic order
(called \emph{grevlex} or \emph{degrevlex} in the literature and
$\operatorname{tdeg}(a_2,a_3,\ldots,a_k)$ in Maple) is used, see
\cite[p.~58]{CLS}. In this order, the monomials are compared first
by their total degree and ties are broken by reverse
lexicographic order, that is, by smallest degree in
$a_{k},a_{k-1},\ldots,a_2$. In order to avoid ambiguity, the
expressions of $V_k$ will be called \emph{reduced} stability
constants.

It is known that the first non-zero stability constant is of odd
order (see \cite{CGM17}). For the sake of completeness, in Section
\ref{S-estabilitat} we  include a proof of this fact, as well as
their algebraic properties that are reminiscent of similar
properties satisfied by the Lyapunov and period constants, see
\cite{ChJ,CGM97,GT,LT,Z}.

If for a value of $a$ it holds that
$V_3(a)=V_5(a)=\cdots=V_{2k-1}(a)=0$ and $V_{2k+1}(a)\ne0$ we will
say that the origin is a {\it weak fixed point of order $k-1,$} by
similitude with the concept of order of a weak focus for
non-degenerated critical point of planar polynomial vector fields.
As we will see in Proposition \ref{p:lower} the maximum number of
2-periodic orbits that bifurcate from a weak fixed point of order
$m$ is $m.$

Our main result, which is proved in section \ref{S-prova}, deals
with the simplest case: the maps $f_a$ are polynomial of fixed
degree $d$, and the parameters are the coefficients of the system.
Notice that the only involution in these families corresponds to the
trivial case $f_{\mathbf{0}}(x)=-x.$ As we will see, even in this
simple setting some questions are not easy to answer.

\begin{teo}\label{T-cicl} Consider the  family of polynomial maps
\begin{equation}\label{E-poly-fam-a}
    f_a(x)=-x+\sum\limits_{j=2}^d a_j x^j, \quad a=(a_2,a_3,\ldots,a_d)\in\R^{d-1}.
\end{equation}
 It has only the trivial involution corresponding to
$a={\bf 0}$ and its cyclicity is at most $\left[(d^2-1)/2\right]$,
where $[\,\cdot\,]$ stands for the integer part.  Furthermore:
\begin{enumerate}
\item[(a)] For $d$ even, its cyclicity is at least $d-2.$ Moreover,
\begin{enumerate}[(i)]
\item For $d=2,4$ it is  $d-2.$
\item For $d=6,8,10$ it is at most  $d-2$ for any $f_a,$ $a\neq
\mathbf{0}.$
\item For $d=6,8,10$ it is at most $5,9,13,$ respectively, for $f_{\mathbf{0}}$.
\end{enumerate}

\item[(b)] For $d$ odd, its cyclicity is at least $d-3.$ Moreover,
\begin{enumerate}[(i)]
\item For $d=3$ it is  $d-2=1.$
\item  For $d=5,7,9$ it is at most $d-2$ for any $f_a,$ $a\neq
\mathbf{0},$ and there is some $a$ such that it is $d-2.$
\item For $d=5,7,9$ it is at most $4,7,10,$ respectively, for $f_{\mathbf{0}}$.
\item  For $d=4m+3,\, m\ge0,$ there are some values of $a$ such that
 the origin is a weak fixed point of order $d-2$ for the corresponding $f_a$.
\end{enumerate}
\end{enumerate}
\end{teo}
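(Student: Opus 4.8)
The plan is to split the proof into the general upper bound and the case-by-case refinements, with the algebra of the reduced stability constants $V_k$ as the central tool.

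\medskip

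\textbf{Step 1: The general upper bound $[(d^2-1)/2]$.} First I would observe that for a polynomial map $f_a$ of degree $d$, a $2$-periodic orbit $\{x,y\}$ with $x\neq y$ is a solution of $f_a(f_a(x))=x$ that is not a fixed point of $f_a$. Writing $g(x)=f_a(f_a(x))-x$, this is a polynomial of degree $d^2$; removing the trivial factor coming from fixed points of $f_a$ (which includes $x=0$ and the other fixed points, factoring out at least $f_a(x)+x$, a degree-$d$ factor, actually one should divide by $f_a(x)-x$ which has degree $d$ and vanishes on $\operatorname{Fix}(f_a)$) leaves a polynomial whose relevant roots come in pairs $\{x,y\}$. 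The count $[(d^2-1)/2]$ should come from: $g$ has degree $d^2$, it is divisible by $x$ (origin is fixed) and by $f_a(x)-x$, and the non-fixed roots pair up, giving at most $(d^2-d)/2$ or a slightly sharper bound; one has to be careful with the exact bookkeeping and with the fact that the relevant reduced polynomial is \emph{even} or \emph{odd} in a suitable sense because $f_a\circ f_a$ is a near-identity map conjugate under $x\mapsto -x$ considerations. I expect the precise constant $[(d^2-1)/2]$ to require using that $W_j=0$ for $j$ even forces the first nonzero stability constant (and indeed the whole structure) to respect an odd symmetry, cf.\ the cited fact from \cite{CGM17}; this is the first technical point to pin down.

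\medskip

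\textbf{Step 2: The lower bounds via weak fixed points.} For the ``at least'' statements (part (a): $d-2$ for $d$ even; part (b): $d-3$ for $d$ odd, and the sharper $d-2$ in (b)(iv) for $d=4m+3$) I would use Proposition~\ref{p:lower}: if one can exhibit $a$ for which the origin is a weak fixed point of order $m$, then $m$ $2$-periodic orbits bifurcate. So the task reduces to showing that the variety $V_3=V_5=\cdots=V_{2k-1}=0$, $V_{2k+1}\neq 0$ is nonempty inside the family for the claimed $k$. This is a question about the ideal $\mathcal{I}_{2k-1}=\langle W_3,W_5,\ldots,W_{2k-1}\rangle$ in $\R[a_2,\ldots,a_d]$: one counts how many independent stability constants can be forced to vanish using $d-1$ free parameters. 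Generically one expects to kill $d-2$ of them (leaving one parameter, e.g.\ a scaling, free), which explains ``$d-2$''; when $d$ is odd there is an extra obstruction dropping this to $d-3$ in general, but for $d=4m+3$ a coincidence (probably a symmetry making one constant automatically vanish, or an extra factorization) restores $d-2$. I would make these explicit by computing the reduced $V_k$ with a Gröbner basis (grevlex) as set up in the excerpt, and exhibiting explicit parameter values; the paper clearly intends this to be done with computer algebra (Maple), so I would present the structure and defer the bulk computation.

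\medskip

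\textbf{Step 3: The sharp values for small $d$.} For the exact-value claims ((a)(i)--(iii), (b)(i)--(iii)) the lower bound is already in hand from Step 2, so what remains is the matching upper bound \emph{better than} $[(d^2-1)/2]$ for these specific small degrees. Here the idea is: after factoring out fixed points, the $2$-periodic orbits of $f_a$ near the origin are governed by a displacement-type function whose Taylor coefficients are (up to units and lower-order ideal terms) the stability constants $V_{2j+1}$; a Bautin-type / Weierstrass-preparation argument then bounds the number of small zeros by the number of generators needed to generate the ideal $\mathcal{I}$ of all stability constants, i.e.\ by the length of an ascending chain of ideals $\mathcal{I}_3\subseteq\mathcal{I}_5\subseteq\cdots$ until it stabilizes. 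For $d=2,3,4,5,7,9$ one computes this stabilization index explicitly and checks it equals $d-2$ (and for the exceptional point $a=\mathbf 0$ one gets the larger numbers $5,9,13,\ldots$ because the family degenerates there — $f_{\mathbf 0}$ is the involution $-x$, all $W_j$ vanish identically, so one must instead bound directly the number of small $2$-periodic orbits of nearby maps, a separate finite computation). \textbf{I expect the main obstacle to be exactly this Bautin-ideal step:} proving that the ideal of stability constants is finitely generated by the first few $V_k$ and that no ``hidden'' zeros occur — this requires either an explicit primary decomposition for each $d$ (feasible only for small $d$, which is why the theorem is stated only for $d\le 10$) or a structural argument bounding the multiplicity, and controlling the case $a=\mathbf 0$ where the generic estimate breaks down. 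The even/odd distinction and the special behaviour at $d=4m+3$ are manifestations of how the syzygies among the $W_j$ change with $d$, and disentangling these is the delicate part.
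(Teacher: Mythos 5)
Your outline captures the right general machinery (stability constants, a Bautin/Weierstrass-preparation scheme, computer algebra), but it has three concrete gaps. First, in Step 2 you assert that exhibiting a weak fixed point of order $m$ already makes $m$ two-periodic orbits bifurcate. That is false as stated: Proposition~\ref{p:lower} only gives $m$ as an \emph{upper} bound at such a point; to realize $m$ orbits you must additionally verify that the gradients $\nabla V_3(a^*),\ldots,\nabla V_{2m-1}(a^*)$ are linearly independent. The paper's lower bounds are not obtained by a generic parameter count (which would not explain why $d$ even behaves better than $d$ odd) but from explicit points: for $d=2n$ one takes $f_{a^*}(x)=-x+x^{2n}$, for which all $W_j$ with $j<4n-1$ vanish automatically and $W_{4n-1}(a^*)=-2n$, and then one proves the gradient independence by using the quasi-homogeneity of the $W_j$ (Proposition~\ref{L-quasi-homo}) to isolate the unique linear monomial $-2a_{2k+1}$ and the unique relevant quadratic monomial $-2(k+1)a_{2(k+1-n)}a_{2n}$. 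For statement (b)(iv) the explicit witness is $f_{a^*}(x)=-x+x^{2m+2}-(m+1)x^{4m+3}$. None of this is recoverable from ``generically one expects to kill $d-2$ of them.''

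Second, in Step 3 you claim the stabilization index of the chain $\mathcal{I}_3\subseteq\mathcal{I}_5\subseteq\cdots$ equals $d-2$ for the small degrees. It does not for $d\geq 5$: the chain stabilizes much later (at $m-1=5,9,13$ for $d=6,8,10$, which is exactly where the weaker bounds for $f_{\mathbf 0}$ come from), and the ideal $\langle V_3,\ldots,V_{2d-3}\rangle$ is \emph{not} radical. The sharp bound $d-2$ for $a\neq\mathbf 0$ requires the separate observation (Lemma~\ref{l:rad}) that suitable \emph{powers} $W_j^{n_j}$ lie in the ideal generated by the first $d-1$ odd reduced constants; this forces any common zero of $V_3,\ldots,V_{2d-3}$ to kill all $W_j$, hence to be the involution $a=\mathbf 0$, which is excluded. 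Without this radical-membership step your argument cannot separate the $a\neq\mathbf 0$ bounds in (a)(ii)/(b)(ii) from the $f_{\mathbf 0}$ bounds in (a)(iii)/(b)(iii). Third, the existence of maps with cyclicity exactly $d-2$ for $d=5,7,9$ (statement (b)(ii)) needs actual constructions — for $d=9$ the paper builds them by truncating involutions $h=g\circ(-\mathrm{Id})\circ g^{-1}$ and certifying nonvanishing via resultants against a degree-16 minimal polynomial — and ``defer the bulk computation'' leaves precisely the hardest existence claims unproved. (Your Step 1 is also more complicated than needed: the paper's bound is just that $f_a\circ f_a$ has degree $d^2$, so at most $d^2$ fixed points, of which $x=0$ is one and the rest pair up.)
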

Observe that the above result only accounts for the number of local
(near $x=0$)  isolated $2$-periodic orbits. For instance, with
respect statement $(a)$ with $d=4$, and although the cyclicity of
the family is $2$, it is easy to find examples with 3 global
$2$-periodic orbits. This is the case, for instance, for the map
$f(x)=-x-7x^2+10x^4,$ which has also four fixed points.
 Notice that, the first statements of the above result
are straightforward. If for some $a,$ $f_a$ has degree $k$ then
$f_a\circ f_a$ has degree $k^2.$ Hence the only involution is
$f_{\mathbf{0}}(x)=-x.$ Moreover,  a priori, the maximum number of
isolated fixed points of $f_a\circ f_a$ for any polynomial map of
degree $d$ is $d^2$. Hence excluding the fixed point $x=0$, we have
that the maxim number of global 2-periodic orbits is
$\left[(d^2-1)/2\right]$. It is not difficult to construct examples
of polynomial maps of degree $d$ (for instance using Chebyshev
polynomials) with $\left[(d^2-d)/{2}\right]$ global 2-periodic
orbits.

\medskip

 It seems natural
 to think that for any $d$ the cyclicity is $d-2.$ For $d$ even,
 we have been able to prove that this value is a lower bound of this cyclicity by using the algebraic
 properties of the stability constants. When  $d$ is odd
 the problem is more difficult. In particular, for $d=4m+5$, it is not easy at all to prove the
 existence of weak fixed points of order $d-2$, see   our proofs
  for
  cases $d=5,9$ in  item (ii) of part (b) of the theorem.

  To  prove that $d-2$ is an upper bound for values of $a$ for which the origin is a weak
 fixed point is sometimes possible because we can use again some algebraic computations
 together with the Weierstrass
 preparation theorem, see Proposition \ref{p:lower} and Lemma \ref{l:rad}. Nevertheless,
 when $a=\mathbf{0},$  our approach needs to show that
 the ideal generated by the first $d-1$ stability constants, say $\mathcal{I},$
  is \emph{radical} and contains all the functions $W_j(a)$ given in
  \eqref{E-stab-cons}. This is only true for $d=2,3,4.$

For $d\geq 4$, the proof of statements (iii) of parts (a) and (b) of Theorem \ref{T-cicl} are based on large symbolic computations.

\medskip

In  Section \ref{SS-bifurcation} we study the relation between the
cyclicity  of weak foci or centers of  planar vector fields and our
results. In particular we show that any map of type \eqref{E-maps}
is a model for the half-return map associated to a weak focus, see
Proposition \ref{P-vf}.

\section{Stability constants and preliminary results}\label{S-estabilitat}

In this section, first we prove some properties of the stability
constants, including also their algebraic properties. Secondly, we
include some standard tools to prove upper or lower bounds for the
cyclicity of families of maps.

A related result to next theorem  is also given in \cite{DEP}, first
in terms of the derivatives of the map $f\circ f$ (Theorem 5.1), and
also using some explicit expressions that are closely related with
the stability constants (Theorem 5.4), which are obtained using
 the Fa\`a di Bruno Formula, \cite{Johnson}.

\begin{teo}\label{T-orient-rev}
Let $f_a$ be an analytic map of the form \eqref{E-maps}. If $f$ is
not an involution, then there exists $m\geq 1$ such that
$V_3=V_5=\cdots=V_{2m-1}=0,$ $V_{2m+1}\neq 0.$ Moreover, if
$V_{2m+1}< 0$ (resp. $>0$), the origin is locally asymptotically
stable (resp. a repeller). In particular, all $V_{2k+1}=V_{2k+1}(a),
k\ge1,$ are polynomials in the variables $a_2,a_3,\ldots,a_{2k+1}$
 and the first reduced stability constants are:
\begin{align*}
  V_3=& -2{a_{{2}}}^{2}-2a_{{3}},\\
  V_5=& -6\,a_{{4}}a_{{2}}+4\,{a_{{3}}}^{2}-2\,a_{{5}},\\
  V_7=& 3\,a_{{2}}a_{{3}}a_{{4}}-8\,a_{{6}}a_{{2}}+13\,a_{{3}}a_{{5}}-4\,{a_{{
4}}}^{2}-2\,a_{{7}},\\
  V_9=& {\frac {242}{17}\,a_{{2}}a_{{3}}a_{{6}}}-{\frac {121}{17}\,a_{{2}}a_{{4}}a_
{{5}}}-10\,a_{{8}}a_{{2}}+{\frac {358}{17}\,a_{{3}}a_{{7}}}-10\,a
_{{4}}a_{{6}}+{\frac
{69}{17}\,{a_{{5}}}^{2}}-2\,a_{{9}},\\
  V_{11}=& {\frac {4563}{121}\,a_{{2}}a_{{3}}a_{{8}}}-{\frac {11765}{242}\,a_{{2}}a_{{4
}}a_{{7}}}+\frac{13}2\,a_{{2}}a_{{5}}a_{{6}}+{\frac
{4407}{242}\,a_{{3}}a_{{ 4}}a_{{6}}}-{\frac
{936}{121}\,a_{{3}}{a_{{5}}}^{2}}\\&-12\,a_{{10} }a_{{2}}+{\frac
{3865}{121}\,a_{{3}}a_{{9}}}-12\,a_{{4}}a_{{8}}+{ \frac
{515}{242}\,a_{{5}}a_{{7}}}-6\,{a_{{6}}}^{2}-2\,a_{{11}},\\
 \end{align*}
  \begin{align*}
  V_{13}=&{\frac {94587200}{1428271}\,a_{{2}}a_{{3}}a_{{10}}}-{\frac {304305945}{2856542}
\,a_{{2}}a_{{4}}a_{{9}}}+{\frac
{2992379}{219734}\,a_{{2}}a_{{5}}a_{{ 8}}}+{\frac
{1939207}{329601}\,a_{{2}}a_{{6}}a_{{7}}}\\&+{\frac
{145516929}{2856542}\,a_{{3}}a_{{4}}a_{{8}}}-{\frac
{138885638}{4284813}\,a_{{3} }a_{{5}}a_{{7}}}+{\frac {4183988}{
1428271}\,a_{{3}}{a_{{6}}}^{2}}-{\frac
{273943}{329601}\,{a_{{4}}}^{2}a_{{7}}}\\&+{\frac {
383791}{109867}\,a_{{4}}a_{{5}}a_{{6}}}-14\,a_{{12}}a_{{2}}+{\frac {
62421386}{1428271}\,a_{{3}}a_{{11}}}-14\,a_{{4}}a_{{10}}-{\frac {
29912981}{2856542}\,a_{{5}}a_{{9}}}\\&-14\,a_{{6}}a_{{8}}+{\frac
{3323839}{329601} \,{a_{{7}}}^{2}}-2\,a_{{13}},\\
  V_{15}=& -{\frac {6188200}{465637}\,a_{{2}}a_{{3}}a_{{5}}a_{{8}}}+{\frac {
964610838}{8847103}\,a_{{2}}a_{{3}}a_{{12}}}-{\frac
{1932055066}{8847103}\,a_{{2 }}a_{{4}}a_{{11}}}\\&+{\frac
{2073461406}{115012339}\,a_{{2}}a_{{5}}a_{{10} }}+{\frac
{102777002}{1396911}\,a_{{2}}a_{{6}}a_{{9}}}+{ \frac
{10885500630}{1070499463}\,a_{{2}}a_{{7}}a_{{8}}}\\&+{\frac {
1324158696}{8847103}\,a_{{3}}a_{{4}}a_{{10}}}-{\frac
{70657783876}{345037017}\,a_{ {3}}a_{{5}}a_{{9}}}+{\frac
{178495020}{8847103}\,a_{{3}}a_{{6}}a_{{8 }}}\\&-{\frac
{10948144126}{1070499463}\,a_{{3}}{a_{{7}}}^{2}}+ {\frac
{888498472}{26541309}\,{a_{{4}}}^{2}a_{{9}}}+{\frac
{2562962080}{115012339} \,a_{{4}}a_{{5}}a_{{8}}}\\&-{\frac
{4032962292}{1070499463}\,a_{{4}}a_{{6} }a_{{7}}}-{\frac
{150876019048}{ 13916493019}\,{a_{{5}}}^{2}a_{{7}}}+{\frac
{546329272}{115012339}\,a_{{5}}{a_{{6}}}^{2}}\\&-16
\,a_{{14}}a_{{2}}+{\frac
{511907618}{8847103}\,a_{{3}}a_{{13}}}-16\,a_ {{4}}a_{{12}}-{\frac
{4393292988}{115012339}\,a_{{5}}a_{{11}}}\\&-16\,a_{
{6}}a_{{10}}+{\frac
{6893660012}{169026231}\,a_{{7}}a_{{9}}}-8\,{a_{{8
}}}^{2}-2\,a_{{15}}.
\end{align*}

\end{teo}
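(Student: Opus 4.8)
The plan is to analyze the composition $g:=f\circ f$ directly. Write $f(x)=-x+\sum_{j\geq2}a_jx^j$; since $f'(0)=-1$, $g'(0)=1$, so $g(x)=x+\sum_{j\geq N}W_jx^j$ with $W_N\neq0$ the first nonzero coefficient (when $f$ is not an involution such an $N$ exists, since $g\neq\mathrm{Id}$ and $g$ is analytic). The stability of the fixed point $x=0$ of $g$ — equivalently the stability of $x=0$ for $f$ — is governed by the sign of $W_N$ when $N$ is odd (attracting if $W_N<0$, repelling if $W_N>0$), and $x=0$ is a ``shift-type'' unstable point when $N$ is even; so the whole theorem reduces to showing $N$ is always odd, i.e.\ $W_{2k}\in\langle W_3,W_5,\ldots,W_{2k-1}\rangle$ whenever $W_3=W_5=\cdots=W_{2k-1}=0$, together with the sign statement and the explicit formulas.

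The key structural step is to exploit the conjugacy symmetry $f\circ g=g\circ f$, equivalently $f\circ f\circ f=f\circ f\circ f$ read as $g\circ f=f\circ g$. Since $f$ is an orientation-reversing diffeomorphism fixing $0$, it conjugates $g$ to $g$ with $f'(0)=-1$; comparing Taylor coefficients in the identity $f(g(x))=g(f(x))$ order by order yields, at each even order $2k$, a relation expressing $2W_{2k}$ (the ``new'' contribution) as a polynomial combination of the lower-order $W_j$'s with $j<2k$ — crucially the coefficient attached to $W_{2k}$ on the two sides differs by the factor $(-1)^{2k}-(-1)^{2k}$... no: one uses instead that an orientation-reversing involution-like symmetry forces the even-order obstructions to lie in the ideal of the odd ones. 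Concretely: if $W_3=\cdots=W_{2k-1}=0$ then $g(x)=x+W_{2k}x^{2k}+O(x^{2k+1})$, and plugging into $f(g(x))=g(f(x))$ and looking at the coefficient of $x^{2k}$ on both sides gives $-W_{2k}=W_{2k}\cdot(-1)^{2k}=W_{2k}$, hence $W_{2k}=0$. This is exactly the mechanism behind the fact, cited from \cite{CGM17}, that the first nonzero stability constant is of odd order, and I would present it as a clean Taylor-coefficient induction. The sign conclusion for $N=2m+1$ odd is then the standard one-dimensional dynamics fact: $g(x)-x=V_{2m+1}x^{2m+1}+O(x^{2m+2})$ with $V_{2m+1}\neq0$, so $0$ is attracting for $g$ iff $V_{2m+1}<0$.

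Finally, for the explicit list of reduced constants $V_3,\ldots,V_{15}$: compute $f\circ f$ symbolically up to order $15$ to get $W_3,\ldots,W_{15}$ as polynomials in $a_2,\ldots,a_{15}$, then successively reduce $W_{2k+1}$ modulo the ideal $\mathcal{I}_{2k}=\langle W_3,W_5,\ldots,W_{2k-1}\rangle$ using a Gr\"obner basis in the \emph{grevlex} order on $a_2,\ldots,a_{2k+1}$ to obtain the normal form $V_{2k+1}$; this is a finite computer-algebra calculation (the paper says it is done in Maple) and produces the displayed rational-coefficient expressions. The main obstacle is not conceptual but bookkeeping: correctly organizing the induction that the even-order obstruction always vanishes (making sure the ``$O(x^{2k+1})$'' terms on both sides of $f\circ g=g\circ f$ are handled uniformly so that the argument at order $2k$ only uses vanishing of $W_j$ for $j<2k$, not for all $j$), and then verifying that the algebra of the symbolic reduction is reproduced faithfully. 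Everything else — the sign statement, the polynomial-in-$(a_2,\ldots,a_{2k+1})$ claim — follows immediately once the odd-order structure is in place.
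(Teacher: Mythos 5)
Your proposal is correct, but the central step --- that the first non-vanishing coefficient of $f\circ f-\mathrm{Id}$ occurs at odd order --- is established by a genuinely different mechanism than in the paper. You argue algebraically: setting $g=f\circ f$ and using the tautological commutation $f\circ g=g\circ f$, you compare the coefficients of $x^{2k}$ on both sides under the hypothesis $W_3=\cdots=W_{2k-1}=0$. This computation is sound: on the left the new contribution at order $2k$ is $f'(0)\,W_{2k}=-W_{2k}$, on the right it is $W_{2k}f(x)^{2k}=(-1)^{2k}W_{2k}x^{2k}+O(x^{2k+1})$, and every other order-$2k$ contribution is the common coefficient $a_{2k}$ of $f$ itself; hence $-W_{2k}=W_{2k}$ and $W_{2k}=0$. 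This works at the level of formal power series and, as you note, only uses the vanishing of the $W_j$ with $j<2k$. The paper instead argues dynamically: if $f(f(x))-x$ had a nonzero even-order leading term it would have constant sign near $0$, and applying the decreasing map $f$ to the inequality $x_2>x_0$ reverses it, contradicting $f(f(x_1))>x_1$. Your route is more self-contained and purely algebraic; the paper's is shorter but relies on local monotonicity of $f$. Two small points to fix in a final write-up: delete the false start in the middle of your argument, and state explicitly (as the paper does in its opening line) that $V_k$ equals $W_k$ modulo $\langle W_3,\ldots,W_{k-1}\rangle$, so that on the locus where the earlier constants vanish $V_k$ and $W_k$ vanish simultaneously and share the same sign --- this is what transfers your conclusions about the $W$'s to the reduced constants $V$'s appearing in the statement. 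The stability assertion and the computer-algebra derivation of the displayed $V_3,\ldots,V_{15}$ are handled exactly as in the paper.
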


\begin{proof}   First, observe that by the definition of normal
form of $W_k$ using a Gr\"obner basis $\mathcal{G}$ of the ideal
$\langle W_3,W_4,\ldots,W_{k-1}\rangle$ it holds that
$V_k=W_k+\sum_{g\in \mathcal{G}} p_g\, g$  where $p_g$ are
polynomials in $a$, see \cite[p.~82]{CLS}. Hence
$\mathrm{sign}(V_k)=\mathrm{sign}(W_k)$.

 Next we prove that that the order of the first
non-zero stability constant is odd. Suppose, to arrive to a
contradiction, that $f(
f(x))-x=W_{2m}x^{2m}+O(x^{2m+1})=V_{2m}x^{2m}+O(x^{2m+1})$ with
$V_{2m}\ne 0.$ For instance assume that $V_{2m}>0.$  Then we can
consider a neighborhood of the origin $\U$  such that for all $x\in
U\setminus\{0\},$ $f$ is strictly monotonically decreasing and
$f(f(x))-x>0.$ Let $x_0\in U\setminus\{0\}$ and consider its orbit
$x_n=f(x_{n-1}), n\ge1.$ We also take $|x_0|$ small enough, such
that $x_1,x_2,x_3 \in U.$ We know that $x_2-x_0=f( f(x_0))-x_0>0.$
Since $f$ is decreasing, it implies that $f(x_2)<f(x_0),$ that is,
$f(f(x_1))<x_1,$ a contradiction with $f(f(x))-x>0.$

 A simple argument gives that the stability of the origin  for $f\circ f$
is determined by the sign of $x\big(f(f(x))-x\big)$ in a
neighborhood of the origin. Observe that when $V_{2m+1}\ne0$, it
holds that for $x\in U\setminus\{0\}$ the function
$x\big(f(f(x))-x\big)=V_{2m+1}x^{2m+2}+O(x^{2m+3})$ has the same
sign that $V_{2m+1}.$ As a consequence, the stability of the origin
 for  both maps $f\circ f$ and $f$ is characterized by the sign of the stability constants.~\end{proof}

\medskip

We continue this section by proving an algebraic property of the
stability constants $W_k$. This property is analogous to the one
possessed by the Lyapunov constants of  weak foci and the period
constants of centers for planar vector fields, see \cite{CGM97}. In
fact, these constants play a similar role to the \emph{Lyapunov
constants} in the study of small amplitude limit cycles of planar
analytic differential systems with weak focus or a center, or the
the \emph{Period constants} in the study of the critical periods
arising in planar centers, \cite{ChJ,CGM97,GT,LT,Z}. Ending with
this list of similarities, we can say that the case where $f\circ
f=\operatorname{Id}$ is the one corresponding with either the center
or the isochronous cases,  in each of the above two analogous
situations.

\begin{propo}\label{L-quasi-homo} The stability constants $W_j$, introduced in \eqref{E-Ws}, associated
to an orientation reversing diffeomorphism of the form
\eqref{E-maps} are quasi-homogeneous polynomials of quasi-degree
$j-1$ and weights $(1,2,\ldots,j-1)$ in the  coefficients
$(a_2,a_3,\ldots,a_j)$, that is
 $$
    W_j(\lambda a_2,\lambda^2
    a_3,\ldots,\lambda^{j-1}a_{j})=\lambda^{j-1}W_j(a_1,\ldots,a_{j}).
$$
\end{propo}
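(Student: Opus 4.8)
The plan is to exploit the scaling/conjugacy symmetry of the family \eqref{E-maps} and track how it propagates through the composition $f\circ f$. Concretely, fix $\lambda\neq 0$ and consider the linear conjugation $h_\lambda(x)=\lambda x$. If $f=f_a$ with $a=(a_2,a_3,\ldots)$, then a direct computation shows that the conjugate map $g:=h_\lambda^{-1}\circ f_a\circ h_\lambda$ is again of the form \eqref{E-maps}, namely $g(x)=-x+\sum_{j\ge 2}\lambda^{j-1}a_j x^j$. In other words, conjugating by $h_\lambda$ has exactly the effect of replacing the parameter vector $a=(a_2,a_3,\ldots)$ by the ``weighted'' vector $(\lambda a_2,\lambda^2 a_3,\ldots,\lambda^{j-1}a_j,\ldots)$. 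This is the structural fact that makes the weights $(1,2,\ldots,j-1)$ appear.

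Next I would relate this to the coefficients $W_j$. Since $g=h_\lambda^{-1}\circ f_a\circ h_\lambda$, we have $g\circ g=h_\lambda^{-1}\circ(f_a\circ f_a)\circ h_\lambda$. Applying $h_\lambda^{-1}\circ(\,\cdot\,)\circ h_\lambda$ to the expansion \eqref{E-Ws}, i.e. to $f_a\circ f_a(x)=x+\sum_{j\ge 3}W_j(a_2,\ldots,a_j)x^j$, yields
\begin{equation*}
 g\circ g(x)=x+\sum_{j\ge 3}\lambda^{j-1}W_j(a_2,\ldots,a_j)x^j.
\end{equation*}
On the other hand, $g$ is itself a map of the form \eqref{E-maps} with parameter vector $(\lambda a_2,\lambda^2 a_3,\ldots)$, so by the very definition of the $W_j$ the coefficient of $x^j$ in $g\circ g(x)$ must equal $W_j(\lambda a_2,\lambda^2 a_3,\ldots,\lambda^{j-1}a_j)$. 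Comparing the two expressions coefficient by coefficient in $x$ gives
\begin{equation*}
 W_j(\lambda a_2,\lambda^2 a_3,\ldots,\lambda^{j-1}a_j)=\lambda^{j-1}W_j(a_2,\ldots,a_j)
\end{equation*}
for every $\lambda\neq 0$, which is precisely the claimed quasi-homogeneity of quasi-degree $j-1$ with weights $(1,2,\ldots,j-1)$ (the case $\lambda=0$ then follows by continuity, or is vacuous). The fact that $W_j$ depends only on $a_2,\ldots,a_j$ — needed so that the weighted substitution makes sense as stated — is already implicit in \eqref{E-Ws}.

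The only mildly delicate point, and the one I would be most careful about, is the bookkeeping in the conjugation step: one must check that $h_\lambda^{-1}\circ f_a\circ h_\lambda$ genuinely lands back in the class \eqref{E-maps} with the asserted coefficients (the linear term stays $-x$ because conjugation by a dilation does not change the multiplier), and that composing the series and then conjugating commutes with conjugating and then composing — this is just associativity of composition, $h_\lambda^{-1}\circ(f\circ f)\circ h_\lambda=(h_\lambda^{-1}\circ f\circ h_\lambda)\circ(h_\lambda^{-1}\circ f\circ h_\lambda)$, but it is the crux of the argument and worth stating explicitly. Everything else is a formal identification of power-series coefficients, valid as an identity of polynomials in $a$ and $\lambda$ once it holds for all real $\lambda\neq0$. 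An alternative, more computational route would be to set up the recursion for the $W_j$ coming directly from $f(f(x))=x+\sum W_j x^j$ and prove quasi-homogeneity by induction on $j$; but the conjugation argument is cleaner and avoids any explicit recursion, so that is the approach I would present.
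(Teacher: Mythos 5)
Your proposal is correct and is essentially identical to the paper's own proof: the paper also conjugates by the dilation $x=\lambda u$, observes that this replaces $a_i$ by $\lambda^{i-1}a_i$, and compares the coefficient of $u^j$ in $g\circ g(u)=\frac{1}{\lambda}f(f(\lambda u))$ with $\lambda^{j-1}W_j(a_2,\ldots,a_j)$. Nothing further is needed.
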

\begin{proof} It can be seen straightforwardly that each coefficient $W_j$ is a
polynomial function of the coefficients of $a_i$ for $i=2,\ldots,j$.

Observe that the change of variables  $x=\lambda u$ conjugates the
map $f(x)=-x+\sum_{i\geq 2} a_i x^i$ with the map $
g(u)=-u+\sum_{i\geq 2} b_i u^i$ where $b_i=\lambda^{i-1}a_i$, and so
conjugates the map $f(f(x))=x+\sum_{j\geq 3} W_j(a_2,\ldots,a_{j})
x^j,$ with
$$
g(g(u))=u+\sum\limits_{j\geq 3} W_j(b_2,\ldots,b_{j}) u^j.
$$
Since $g(g(u))=\frac{1}{\lambda}f(f(\lambda u))$ we have
\begin{align*}
u+\sum\limits_{j\geq 3} W_j(b_2,\ldots,b_{j}) u^j&=
\frac{1}{\lambda}\left(\lambda u+\sum\limits_{j\geq 3}
W_j(a_2,\ldots,a_{j}) (\lambda u)^j\right)\\&= u+\sum\limits_{j\geq
3} \lambda^{j-1}W_j(a_2,\ldots,a_{j}) u^j.
\end{align*}
Hence $W_j(b_2,\ldots,b_{j})=\lambda^{j-1}W_j(a_2,\ldots,a_{j})$.~
\end{proof}

As we will see, the above result is a key tool to prove part of item
$(a)$ of Theorem \ref{T-cicl}. It is also useful to find algebraic
relations among the polynomials $W_j$ because a priori they give
some restrictions on them.

 As we have already explained in their definition, the
explicit expressions of the reduced stability constants have been
obtained first by computing coefficients of the Taylor expansion of
$f\circ f$ and afterwards, by taking the normal form of $W_k$ in the
Gr\"{o}bner basis of $\langle W_3,W_k,\ldots, W_{k-1}\rangle$ when
the graded reverse lexicographic order  is used. The above results
states that the stability constants $W_k$ are quasi-homogeneous
polynomials in the coefficients of the maps.  Notice that the
reduced stability constants  $V_k,$  given in Theorem
\ref{T-orient-rev}, are also quasi-homogeneous polynomials.

 Next results collect and adapt some  tools for
studying the number of zeroes of families of smooth maps that are
borrowed from the techniques used to study the number of small
amplitude limit cycles bifurcating from weak foci or centers.

\begin{propo}\label{p:upper} Let $W_j=W_j(a)$ and $V_j=V_j(a)$ be the polynomials
associated to the family of maps \eqref{E-poly-fam-a}  given in
\eqref{E-stab-cons}. Assume that there exists $m=m(d)$ such that for
all $k=1,2,\ldots,m-1$,
$$\langle W_3,W_4,\ldots,W_{2k+1}\rangle=\langle
W_3,W_4,\ldots,W_{2k+2}\rangle=\langle
V_3,V_5,\ldots,V_{2k+1}\rangle$$ and $\langle
V_3,V_5,\ldots,V_{2m+1}\rangle=\langle W_3,W_4,\ldots
W_{d^2}\rangle.$ Then the cyclicity of the family is at most $m-1.$
\end{propo}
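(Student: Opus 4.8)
The plan is to show that, under the stated hypotheses, the 2-periodic orbits of $f_a$ near the origin are controlled by finitely many analytic functions whose common zero set is cut out, as an ideal, by $\{V_3,V_5,\ldots,V_{2m+1}\}$, and then to invoke the standard Bautin-type argument bounding the number of zeros. First I would set up the displacement-type map: a point $x\ne 0$ sits in a 2-periodic orbit of $f_a$ if and only if $f_a(f_a(x))=x$, so by \eqref{E-Ws} we must count the small nonzero roots of
\begin{equation*}
 F(x,a):=f_a(f_a(x))-x=\sum_{j\geq 3}W_j(a)\,x^j=x^3\sum_{j\geq 3}W_j(a)\,x^{j-3}.
\end{equation*}
For polynomial maps of degree $d$ the map $f_a\circ f_a$ has degree $d^2$, so $F(x,a)/x^3$ is in fact a polynomial in $x$ of degree $d^2-3$ whose coefficients are exactly $W_3,W_4,\ldots,W_{d^2}$. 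Each nonzero root $x$ yields the 2-periodic orbit $\{x,f_a(x)\}$, and since $f_a$ is orientation reversing, $x$ and $f_a(x)=-x+\cdots$ are distinct small roots with opposite sign that pair up; hence the number of 2-periodic orbits in a small neighborhood is at most half the number of small nonzero roots of $F(x,a)/x^3$, and in particular it suffices to bound the latter.

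Next I would use the hypotheses to reorganize the ideal. By assumption the chain of ideals generated by the $W_j$ stabilizes in the sense that $\langle W_3,\ldots,W_{d^2}\rangle=\langle V_3,V_5,\ldots,V_{2m+1}\rangle$, and moreover at each intermediate stage adding the even-indexed $W$'s does not enlarge the ideal and the odd-indexed ones reduce to the $V$'s. The point of this is a Bautin-style factorization: working in the ring of (polynomial, or convergent power series) functions of $a$, every coefficient $W_j(a)$ lies in the ideal $\langle V_3,V_5,\ldots,V_{2m+1}\rangle$, so we may write
\begin{equation*}
 F(x,a)=x^3\Bigl(V_3(a)\,h_3(x,a)+V_5(a)\,h_5(x,a)+\cdots+V_{2m+1}(a)\,h_{2m+1}(x,a)\Bigr),
\end{equation*}
with $h_{2i+1}(0,a)\ne 0$ after the usual triangular normalization (one checks, using that $V_{2i+1}=W_{2i+1}+(\text{ideal of earlier }V\text{'s})$ and that $W_{2i+1}$ is the coefficient of $x^{2i+1}$, that the "leading" contribution of $V_{2i+1}$ to $F$ is $V_{2i+1}(a)x^{2i+1}$, so $h_{2i+1}$ has a nonzero constant term in $x$). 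Then a standard argument — for fixed $a$ near $a^*$, the number of small zeros of a function of the form $\sum_{i=1}^{m+1}\beta_i\,\phi_i(x)$ with $\phi_i$ analytic and $\phi_i(0)\ne0$, together with a derivative/division (Rolle) induction, or equivalently the Weierstrass preparation theorem applied repeatedly — gives that $F(x,a)/x^3$ has at most $m$ small zeros counted with multiplicity, uniformly for $a$ in a small neighborhood of $a^*$. Pairing the roots as above, the cyclicity is at most $\lfloor m/2\rfloor$; but in fact, since the relevant roots come from a polynomial of the form $x^{2m+1}\cdot(\text{unit})$ after factoring, one gets the sharper count $m$ on the roots of $F/x^3$ and then $m-1$ on the orbits once the spurious root corresponding to the pairing symmetry (or the structural reduction of the highest constant) is accounted for — this bookkeeping is what produces $m-1$ rather than $m$.

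The main obstacle, and the step I would treat most carefully, is precisely that last bookkeeping: justifying that the correct bound on the number of 2-periodic \emph{orbits} is $m-1$ and not merely $\lfloor m/2\rfloor$ or $m$. Two things have to be reconciled. On one hand, the Bautin factorization through $m+1$ generators $V_3,\dots,V_{2m+1}$ naively gives $m$ zeros of $F/x^3$; on the other hand, the orbit structure halves this. The resolution is that the odd symmetry inherent in "$x$ and $f_a(x)$ both being roots" forces $F(x,a)/x^3$ to behave, near $x=0$, like an odd function of $x$ times a unit (this is the discrete analogue of the displacement map of a half-return map being odd), so its small nonzero roots come in pairs $\pm$-like and the count on orbits is $(m)/1$ reorganized — concretely, one shows $F(x,a)=-F(f_a(x),a)\cdot(\text{unit})$ or uses that $W_{\text{even}}\in\langle W_{\text{odd}}\rangle$ (which is exactly the first displayed hypothesis) to deduce that the Weierstrass polynomial of $F/x^3$ in $x$ is, up to a unit, a polynomial in $x$ whose roots pair into $m-1$ genuine orbits plus possibly the origin's own contribution. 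I would make this rigorous by combining the parity statement from Theorem~\ref{T-orient-rev} (first nonzero stability constant is odd) with the Weierstrass preparation theorem as in Lemma~\ref{l:rad} and Proposition~\ref{p:lower}, and only then conclude that the cyclicity of the family is at most $m-1$.
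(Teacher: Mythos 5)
Your overall strategy is the same as the paper's: factor the displacement $h_a(x)=(f_a(f_a(x))-x)/x^3$ through the ideal $\langle V_3,V_5,\ldots,V_{2m+1}\rangle$ using the hypotheses, and then bound the number of small zeros by a division--derivation (Rolle) induction. However, the one step that actually produces the bound $m-1$ --- your final ``bookkeeping'' --- is left unresolved, and the candidate resolutions you sketch do not work. Two concrete miscounts cause the trouble. First, there are $m$ generators $V_3,V_5,\ldots,V_{2m+1}$ (the indices are $2k+1$ for $k=1,\ldots,m$), not $m+1$. Second, the correct factorization is
$h_a(x)=\sum_{j=1}^{m}V_{2j+1}(a)\bigl(1+x\,\psi_{2j+1}(x,a)\bigr)x^{2j-2}$,
so the cofactor of $V_{2j+1}$ vanishes at $x=0$ to order exactly $2j-2$; these strictly increasing vanishing orders $0,2,\ldots,2m-2$ are precisely what makes the division--derivation induction (divide by the unit $1+x\psi_3$, differentiate, divide by $x$, repeat) reduce the number of terms by one at each stage. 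Your requirement that all $h_{2i+1}(0,a)\ne0$ would destroy this Chebyshev-type structure, and a sum of $m$ analytic functions all nonvanishing at the origin admits no such zero bound at all.

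With the two counts corrected, the conclusion falls out cleanly and none of your proposed patches (odd symmetry of $F$, a ``spurious root'', halving a count of $m$) is needed: restrict to $x>0$ and run the induction on the $m$-term sum to get at most $m-1$ positive isolated zeros of $h_a$ near the origin; since $f_a$ locally exchanges the signs of $x$, every $2$-periodic orbit near the origin contains exactly one positive point, so the number of orbits equals the number of positive zeros and is at most $m-1$. (Equivalently, one could bound all nonzero roots by $2(m-1)$ by running the argument on each side of the origin and then divide by two, but counting only positive roots avoids the pairing discussion entirely.) As written, your argument oscillates between the bounds $\lfloor m/2\rfloor$, $m$ and $m-1$ without establishing any of them, so the proof is not complete.
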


\begin{proof}
We need to study the number of  isolated positive zeroes in a
neighborhood of the origin of the maps
\begin{equation}\label{e:forma}
h_a(x)=\frac{f_a(f_a(x))-x}{x^3}=\sum_{j= 3}^{d^2}
W_j(a)x^{j-3}=\sum_{j=1}^m
V_{2j+1}(a)\big(1+x\psi_{2j+1}(x,a)\big)x^{2j-2},
\end{equation}
where, to write the last equality, we have used the hypotheses on
the polynomials $W_j$ and $V_{2j+1}$ and $\psi_{2j+1}$ are
polynomial functions. Notice that these zeroes always correspond to
2-periodic orbits of $f_a$ and are not fixed points because,
locally, $f_a$ sends positive values of $x$ to negative ones, and
viceversa.

The procedure that we follow is rather standard and it is usually
called \emph{division-derivation algorithm}. Other examples of its
application can be seen in \cite{ChJ, GLT, R,Z}.

  We will prove by induction that any map of the form
\begin{equation}\label{e:formagen}
h_a(x)=\sum_{j=1}^k
g_{j}(a)\big(1+x\psi_{j}(x,a)\big)x^{2j-2},
\end{equation}
where $\psi_{j}$ are smooth functions in $x$, has at most
$k-1$ positive isolated zeroes is any small enough neighborhood of the
origin.

When $k=1,$ then obviously the function \eqref{e:formagen} has not zeroes.
Assume that the result holds for $k=m-1$. Set $k=m$,
then
\begin{align*}
\frac{h_a(x)}{1+x\psi_{1}(x,a)}=&\sum_{j=1}^mg_j(a)\frac{1+x\psi_{j}(x,a)}{1+x\psi_{1}(x,a)}x^{2j-2}\\=&
g_1(a)+\sum_{j=2}^m
g_{j}(a)\big(1+x\phi_{j}(x,a)\big)x^{2j-2},
\end{align*}
where $\phi_{j}$ are smooth functions in $x$. Then, for some new smooth
functions $\varphi_{j}$ and $\zeta_j$:
\begin{align*}
\frac{d}{dx}\Big(\frac{h_a(x)}{1+x\psi_{1}(x,a)}\Big)=& \sum_{j=2}^m
g_j(a)\big(2j-2+x\varphi_{j}(x,a)\big)x^{2j-3}\\&=\sum_{j=2}^m
(2j-2)\,g_j(a)\big(1+x\zeta_{j}(x,a)\big)x^{2j-3}.
\end{align*}
Observe that the map
\begin{align*}
\frac{1}{x}\frac{d}{dx}\Big(\frac{h_a(x)}{1+x\psi_{1}(x,a)}\Big)=&
\sum_{j=2}^m (2j-2)\,g_j(a)\big(1+x\zeta_{j}(x,a)\big)x^{2j-4}\\=&
\sum_{i=1}^{m-1} \tilde{g}_i(a)\big(1+x\xi_{i}(x,a)\big)x^{2i-2},
\end{align*}
where $ \tilde{g}_i(a)=2i \,g_{i+1}(a)$ and  $\xi_i(x,a)=\zeta_{i+1}(x,a)$, is of the form
\eqref{e:formagen} with $k=m-1$. Hence, by the induction hypothesis it has at most $m-2$
zeroes in any positive
neighborhood of the origin. Hence, by the Rolle's Theorem the map $h_a$ has at most $m-1$ zeroes.

Of course, since the map \eqref{e:forma} is in the form
\eqref{e:formagen}, the result follows. Observe that if for some
values of $a,$ one of the $V_{2j+1}$ vanishes, the division
derivation procedure for this value of $a$ can be accelerated and
gives rise to less number of positive zeroes.
\end{proof}

\begin{propo}\label{p:lower} Let $V_j=V_j(a)$ be the reduced stability constants
associated to the family of maps \eqref{E-poly-fam-a}  given in
\eqref{E-stab-cons}. Assume that for $a=a^*$ the map has a weak
fixed point of order $m-1,$ that is,
$V_3(a^*)=V_5(a^*)=\cdots=V_{2m-1}(a^*)=0$ and $V_{2m+1}(a^*)\ne0.$
Then, the maximum cyclicity of $f_{a^*}$ is $m-1.$

Moreover, if the $m-1$ vectors
\[
\nabla V_3(a^*), \nabla V_5(a^*),\ldots, \nabla V_{2m-1}(a^*),
\]
where $\nabla=(\partial/\partial a_2, \partial/\partial a_3,\ldots,
\partial/\partial a_{m}),$ are linearly independent,  the
cyclicity of the map $f_{a^*}$ is  $m-1.$
\end{propo}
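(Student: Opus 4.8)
The plan is to prove the two assertions separately. The first — that the maximum cyclicity of $f_{a^*}$ is $m-1$ — has an upper bound part and a lower bound part. For the upper bound I would argue exactly as in Proposition \ref{p:upper} but localized at $a^*$: since $V_3(a^*)=\cdots=V_{2m-1}(a^*)=0$ and $V_{2m+1}(a^*)\neq0$, for $a$ close to $a^*$ the function $h_a(x)=(f_a(f_a(x))-x)/x^3$ can be written, using the quasi-homogeneity/ideal structure of the $W_j$ together with the Weierstrass preparation theorem applied to $h_a(x)$ near $(x,a)=(0,a^*)$, as a combination $\sum_{j=1}^{m}G_j(a)\bigl(1+x\psi_j(x,a)\bigr)x^{2j-2}$ with $G_j(a^*)=0$ for $j\le m-1$, $G_m(a^*)\neq0$. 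Then the division--derivation argument of Proposition \ref{p:upper} gives at most $m-1$ positive isolated zeroes in a neighborhood of the origin for $a$ near $a^*$, hence at most $m-1$ $2$-periodic orbits. The lower bound follows from the (assumed, or generically available) ability to make the first $m-1$ reduced constants independently small: this is precisely what the second part of the statement formalizes via the linear independence hypothesis, so I would fold the lower bound into that argument.

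For the second assertion — that linear independence of $\nabla V_3(a^*),\ldots,\nabla V_{2m-1}(a^*)$ forces cyclicity exactly $m-1$ — I would use the standard bifurcation technique. Consider the map $\Phi(a)=\bigl(V_3(a),V_5(a),\ldots,V_{2m-1}(a)\bigr)$ from a neighborhood of $a^*$ in parameter space to $\R^{m-1}$; its derivative at $a^*$ has rank $m-1$ by hypothesis, so by the implicit function theorem $\Phi$ is a submersion near $a^*$ and its image covers a full neighborhood of $0\in\R^{m-1}$. The idea is to choose a path of parameters along which we successively reverse the signs of $V_{2m-1},V_{2m-3},\ldots,V_3$, each time by an amount much smaller than the previous perturbation, so that at each stage a new simple positive zero of $h_a$ is created near the origin (each sign change produces one extra transversal intersection of the graph of $h_a$ with the $x$-axis, at a scale controlled by the relative size of the perturbations). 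Keeping $V_{2m+1}(a)$ bounded away from zero throughout, one obtains parameters $a$ arbitrarily close to $a^*$ for which $h_a$ has exactly $m-1$ simple positive zeroes in an arbitrarily small neighborhood of the origin, i.e.\ $m-1$ isolated $2$-periodic orbits. Together with the upper bound this pins the cyclicity at $m-1$.

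The main obstacle is making the perturbation argument in the second paragraph rigorous and uniform: one must quantify how the zeroes of $h_a$ move as the $V_{2j+1}(a)$ are varied, and in particular show that the zero created at step $k$ is not destroyed by the later (smaller) perturbations and lies at a scale where it does not collide with the origin or with previously created zeroes. The cleanest way I would handle this is to first pass to the ``staircase'' normal form $h_a(x)\approx\sum_{j=1}^{m}V_{2j+1}(a)x^{2j-2}(1+O(x))$, rescale $x\mapsto\varepsilon x$ so the relevant window becomes order one, and then invoke a Rolle-type / degree argument to count exactly $m-1$ sign changes; the linear independence hypothesis is exactly what guarantees that the coefficients $V_3,\ldots,V_{2m-1}$ can be assigned any small target values independently, which is what the staircase construction needs. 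A secondary technical point is the Weierstrass preparation step in the upper bound: one must check that $h_a(x)$, as a function of $x$ with parameter $a$ near $a^*$, is regular of order $2m-2$ in $x$ at $a^*$ so that it factors as a unit times a Weierstrass polynomial of degree $2m-2$ whose coefficients vanish with the $V_{2j+1}(a)$; this is immediate from $V_{2m+1}(a^*)\neq0$.
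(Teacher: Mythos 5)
Your second assertion is handled essentially as in the paper: linear independence of the gradients makes $a\mapsto\bigl(V_3(a),\ldots,V_{2m-1}(a)\bigr)$ a local submersion at $a^*$, and one flips the signs of $V_{2m-1},V_{2m-3},\ldots,V_3$ one at a time, each perturbation much smaller than the previous, producing $m-1$ sign changes of $h_a$ on $(0,\delta)$ and hence $m-1$ positive zeroes by Bolzano's theorem. That part is fine and coincides with the paper's argument (which is no more quantitative than yours).

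The gap is in your upper bound. The staircase form $h_a(x)=\sum_{j=1}^{m}G_j(a)\bigl(1+x\psi_j(x,a)\bigr)x^{2j-2}$ near $a=a^*$ is \emph{not} a consequence of the Weierstrass preparation theorem: to absorb the intermediate coefficients ($W_4(a)$, or the $A_{2j-1}(a)$ of the Weierstrass polynomial) into the correction terms $x\psi_j(x,a)$ you need, for instance, $W_4$ to be divisible by $W_3$ near $a^*$ --- precisely the ideal hypotheses of Proposition \ref{p:upper}, which are not assumed in Proposition \ref{p:lower} and have no reason to hold at an arbitrary $a^*$. So the division--derivation step has nothing to act on. The fix is cheaper than what you attempt, and it is what the paper does: Weierstrass preparation alone gives $H(x,a)=\bigl[x^{2m-2}+A_{2m-3}(a)x^{2m-3}+\cdots+A_0(a)\bigr]g(x,a)$ with $g(0,a^*)=V_{2m+1}(a^*)\neq0$, hence at most $2m-2$ nonzero roots of $f_a\circ f_a(x)-x$ near the origin for $a$ near $a^*$; and since each $2$-periodic orbit contributes \emph{two} such roots (one positive and one negative, because $f_a$ exchanges the two points of the orbit and locally sends positive $x$ to negative $x$), at most $m-1$ orbits survive. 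This root-pairing observation is the step missing from your write-up; without it the Weierstrass bound only gives $2m-2$. A minor further point: the first assertion of the proposition is only an upper bound (``maximum cyclicity is $m-1$'' means ``at most $m-1$''), so no lower bound needs to be folded into it.
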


\begin{proof} To prove that the maximum cyclicity of the origin of $f_{a^*}$
is $m-1,$ as usual, we will apply the Weierstrass preparation
theorem (\cite{gr}) to the function $h_a(x)$ introduced in
\eqref{e:forma}. More precisely, write
$H(x,a)=H(x,a_2,a_3,\ldots,a_d)=h_a(x)$ as a
 holomorphic function with $d$-variables. Notice that
\[
H(x,a_2^*,a_3^*,\ldots,a_d^*)=V_{2m+1}(a^*)x^{2m-2}+O(x^{2m-1})
\]
and hence, we are under the hypotheses of that theorem. Therefore,
in a neighborhood in $\C^d$ of $(0,a^*),$ it holds that

\begin{multline}
H(x,a_2,a_3,\ldots,a_d)\\=\big[x^{2m-2}+A_{2m-3}(a)x^{2m-3}+A_{2m-3}(a)x^{2m-3}+\cdots+A_1(a)x+A_0(a)
\big]g(x,a),
\end{multline} where  $A_j$ and $g$ are holomorphic functions,
$g(0,a^*)=V_{2m+1}(a^*)\ne0$ and $A_j(a^*)=0.$ As a consequence, for
parameters in a neighborhood of $a=a^*$ the function $h_a$ has at
most $2m-2$ non-zero roots in a neighborhood of the origin. Since
the non-zero roots of this function appear in couples (for each
positive zero corresponding to a 2-periodic orbit, there is a
negative one corresponding to the other point of this orbit), we
have proved that the number of positive zeroes in a neighborhood of
the origin is at most $m-1,$ giving the desired bound for the
cyclicity.

The proof of the second part is also based on a well-known approach,
see for instance \cite{CGP}. It simply uses Bolzano's theorem and
consists on producing successive changes of stability of the origin.
We give the details when $m=3.$ The general case follows by using
the same type of arguments. Recall that $h_a(x)=(f_a(f_a(x))-x)/x^3$
and its positive zeroes give rise to the 2-periodic orbits.

We have that for $a=a^*$ it holds that $V_3(a^*)=V_5(a^*)=0$ and
$V_7(a^*)\ne0$. Assume without loss of generality that $V_7(a^*)<0.$
If $\delta_2$ is small enough then for all $0<\delta<\delta_2$ there
exists $x_0>0$ such that $|x_0|<\delta$ such that $h_{a^*}(x_0)<0$.
Consider the mapping $\Phi$ from $\R^2$ to $\R^2$ given by
$\Phi(a_2,a_3)= \left(V_3(a_2,a_3,a_4^*),V_5(a_2,a_3,a_4^*)\right).$
Then $\Phi(a_2^*,a_3^*)=(0,0)$ and since by hypothesis ${\nabla}
V_3(a^*),{\nabla V_5}(a^*) $ are linearly independent, $\det\big(
D\Phi(a^*)\big)\ne0.$ This fact implies that $\Phi$ is locally
exhaustive. Hence, we can find values $a=(a_2,a_3,a_4^*)$ as near as
we want of $a^*,$ say $|a-a^*|<\epsilon_1$, with $V_5(a)>0$ and
$V_3(a)=0.$ This fact implies that there exists $0<x_1<x_0<\delta$
such that $h_a(x_1)>0$ but still $h_a(x_0)<0.$ Hence, there exists
at least a positive root of $h_a$ in $(x_1,x_0)$. Now let $a$ with
$|a-a^*|<\epsilon_2<\epsilon_1$ such that $V_3(a)<0$ and, yet
$h_a(x_2)h_a(x_1)<0.$  Finally, we get that there exists $0<x_2<x_1$
satisfying $h_a(x_2)<0$ with $h_a(x_1)>0$ and $h_a(x_0)<0.$ Hence,
for $(x,a)\in(0,\delta)\times D_{\epsilon_2}(a^*),$ $h_a(x)$ has two
positive zeros corresponding with the two announced  2-periodic
orbits.
\end{proof}

 Next proposition extends the second part of the
previous one, when instead of dealing with the reduced stability
constants we consider the stability constants. Its proof is similar
and we omit it.

\begin{propo}\label{p:lower2} Let $W_j=W_j(a)$ be the  stability constants
associated to the family of maps~\eqref{E-poly-fam-a}  given
in~\eqref{E-stab-cons}. Assume that for $a=a^*$ the map has a weak
fixed point of order $m-1,$ that is,
$W_3(a^*)=W_4(a^*)=\cdots=W_{2m-1}(a^*)=W_{2m}(a^*)=0$ and
$W_{2m+1}(a^*)\ne0.$ Then, if the $m-1$ vectors
\[
\nabla W_3(a^*), \nabla W_5(a^*),\ldots, \nabla W_{2m-1}(a^*),
\]
where $\nabla=(\partial/\partial a_2, \partial/\partial a_3,\ldots,
\partial/\partial a_{m}),$ are linearly independent,  the
cyclicity of the map $f_{a^*}$ is at least $m-1.$
\end{propo}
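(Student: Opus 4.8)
The plan is to follow the scheme of the second part of the proof of Proposition~\ref{p:lower}, replacing the reduced stability constants $V_j$ by the stability constants $W_j$; the only genuinely new ingredient is Theorem~\ref{T-orient-rev}, which must be invoked to keep the even-indexed constants under control (a bookkeeping the $V_j$ did automatically). Throughout I would work with
\[
h_a(x)=\frac{f_a(f_a(x))-x}{x^3}=\sum_{j=3}^{d^2}W_j(a)\,x^{j-3},
\]
which is a non-zero polynomial for $a$ near $a^\ast$ --- note $a^\ast\neq\mathbf{0}$, since a weak fixed point cannot occur at the trivial involution --- so its zeros are isolated, and its positive zeros near $x=0$ are in one-to-one correspondence with the $2$-periodic orbits of $f_a$ near the origin (a small positive zero $x^\ast$ has $f_a(x^\ast)<0\neq x^\ast$, so $\{x^\ast,f_a(x^\ast)\}$ is a genuine $2$-periodic orbit in $D_\delta(0)$, distinct positive zeros giving distinct orbits). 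Assuming, as we may, that $W_{2m+1}(a^\ast)<0$, we get $h_{a^\ast}<0$ on some interval $(0,\delta)$; fix $x_0\in(0,\delta)$.

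First I would record the following consequence of Theorem~\ref{T-orient-rev}: if $W_3(a)=W_5(a)=\cdots=W_{2\ell-1}(a)=0$ then automatically $W_4(a)=W_6(a)=\cdots=W_{2\ell}(a)=0$ (otherwise the first of $W_3(a),W_4(a),\dots$ not to vanish would have even index, against the odd-order statement); hence on such a locus $h_a(x)=W_{2\ell+1}(a)\,x^{2\ell-2}+O(x^{2\ell-1})$, so the sign of $h_a$ on a small enough $(0,\xi)$ is that of $W_{2\ell+1}(a)$. Next, since the $m-1$ gradients $\nabla W_3(a^\ast),\ldots,\nabla W_{2m-1}(a^\ast)$ (in $a_2,\ldots,a_m$) are linearly independent, the map
\[
\Phi(a_2,\ldots,a_m)=\left(W_3,W_5,\ldots,W_{2m-1}\right)(a_2,\ldots,a_m,a_{m+1}^\ast,\ldots,a_d^\ast)
\]
has invertible differential at $a^\ast$, hence is a local diffeomorphism onto a neighbourhood of $0\in\R^{m-1}$, so the values of these $m-1$ odd constants can be prescribed freely near $0$.

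Then I would run the standard $(m-1)$-step cascade of stability changes of the origin (as in Proposition~\ref{p:lower} and \cite{CGP}). At step $k=1,\ldots,m-1$, use $\Phi$ to choose $a^{(k)}$ with $W_3(a^{(k)})=\cdots=W_{2m-2k-1}(a^{(k)})=0$ and $\left(W_{2m-2k+1},\ldots,W_{2m-1}\right)(a^{(k)})=\left(s_k\varepsilon_k,s_{k-1}\varepsilon_{k-1},\ldots,s_1\varepsilon_1\right)$, with the signs $s_i$ alternating and $0<\varepsilon_k\ll\varepsilon_{k-1}\ll\cdots\ll\varepsilon_1$ chosen successively. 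By the observation above $h_{a^{(k)}}$ has constant sign $s_k$ on some $(0,\xi_k)$; taking $\varepsilon_k$ small makes $a^{(k)}$ close enough to $a^{(k-1)}$ that the non-zero signs of $h$ at the finitely many points $x_0,\ldots,x_{k-1}$ already chosen are unchanged, after which one fixes a new point $x_k\in(0,\min\{\xi_k,x_{k-1}\})$. After step $m-1$, the values of $h_{a^{(m-1)}}$ on $(0,\xi_{m-1})$ and at $x_{m-2},\ldots,x_0$ alternate in sign, so Bolzano's theorem yields $m-1$ positive zeros of $h_{a^{(m-1)}}$ in $(0,x_0)\subset(0,\delta)$, i.e.\ $m-1$ distinct isolated $2$-periodic orbits of $f_{a^{(m-1)}}$ in $D_\delta(0)$; as $\delta$ and all the $\varepsilon_i$ were arbitrarily small, $a^{(m-1)}$ lies in any prescribed neighbourhood of $a^\ast$, which is the asserted lower bound. (By the first part of Proposition~\ref{p:lower}, applicable since $V_{2m+1}(a^\ast)=W_{2m+1}(a^\ast)\neq0$, the cyclicity is then exactly $m-1$.)

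The step I expect to be the real obstacle, and the reason this is not a verbatim copy of the proof of Proposition~\ref{p:lower}, is the handling of the even-indexed $W_j$: a careless perturbation could let a term such as $W_4(a)\,x$ dominate $W_3(a)$ near the origin and destroy the sign pattern. The lemma drawn from Theorem~\ref{T-orient-rev} removes this difficulty, provided one is careful that at each step of the cascade only an \emph{initial segment} of the odd constants is forced to zero, so that the currently active constant $W_{2m-2k+1}(a^{(k)})$, together with the consequent vanishing of all $W_j(a^{(k)})$ for $3\le j\le 2m-2k$, genuinely governs the leading behaviour of $h_{a^{(k)}}$ at $x=0$. With that arranged the remainder is a routine Bolzano manipulation.
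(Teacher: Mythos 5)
Your proof is correct and is essentially the argument the paper has in mind: the paper omits the proof of Proposition~\ref{p:lower2}, stating only that it is ``similar'' to the Bolzano/stability-cascade argument in the second part of Proposition~\ref{p:lower}, and your cascade on the odd-indexed constants is exactly that argument. The one point you add --- using Theorem~\ref{T-orient-rev} to conclude that the vanishing of $W_3,W_5,\ldots,W_{2\ell-1}$ forces the vanishing of $W_4,\ldots,W_{2\ell}$, so that $W_{2\ell+1}$ really does control the sign of $h_a$ near $0$ --- is precisely the detail that makes the ``similar'' proof go through with the $W_j$ in place of the $V_j$, and it is handled correctly.
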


\begin{lem}\label{l:rad}  Let $W_j=W_j(a)$ and $V_j=V_j(a)$ be the stability constants
associated to the family of maps \eqref{E-poly-fam-a} given in
\eqref{E-stab-cons}. Assume that there exist
 $k\geq 3$ and
$0<n_j\in\N$ such that for all $j=3,4,\ldots,d^2$,
\begin{equation}\label{e:Wjnj}
 W_j^{n_j}\in \langle
V_3,V_5,\ldots,V_{2k+1}\rangle.\end{equation}  Let $\ell$ denote the
minimum $k$ such that \eqref{e:Wjnj} holds. Then, the highest order
of the origin as  weak fixed point  is $\ell-1.$ Moreover, the
maximum cyclicity of any map $f_a,$ with $a\ne\mathbf{0},$ is also
$\ell-1$.
\end{lem}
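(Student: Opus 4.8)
The plan is to combine the hypothesis \eqref{e:Wjnj} with the upper-bound machinery of Proposition~\ref{p:upper} and the lower-bound machinery of Proposition~\ref{p:lower2}, treating separately the claim about the highest order of the origin as a weak fixed point and the claim about the maximum cyclicity of the maps $f_a$ with $a\neq\mathbf{0}$. The common algebraic fact I would establish first is that the radical of the ideal $\mathcal{I}_\ell:=\langle V_3,V_5,\ldots,V_{2\ell+1}\rangle$ coincides with the radical of $\langle W_3,W_4,\ldots,W_{d^2}\rangle$, hence with the radical of the full ideal $\langle W_j : j\geq 3\rangle$ generated by all the stability constants. One inclusion is immediate because each $V_{2j+1}$ with $j\le\ell$ lies in $\langle W_3,\ldots,W_{2j+1}\rangle$ by the very definition \eqref{E-stab-cons} of the reduced stability constants (they are normal forms of $W_{2j+1}$ modulo $W_3,\ldots,W_{2j}$). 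The other inclusion is exactly the content of \eqref{e:Wjnj}: every generator $W_j$ of the big ideal has a power inside $\mathcal{I}_\ell$, so $W_j\in\sqrt{\mathcal{I}_\ell}$. Thus the two ideals have the same variety.

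For the statement about the order of the weak fixed point: if $a^*$ is such that $V_3(a^*)=\cdots=V_{2\ell-1}(a^*)=0$, I claim $V_{2\ell+1}(a^*)$ may be nonzero but no higher order can occur, i.e.\ the order is at most $\ell-1$. Indeed, suppose for contradiction that $a^*$ were a weak fixed point of order $\ge\ell$, so that in addition $V_{2\ell+1}(a^*)=0$; then $a^*$ would lie in the variety of $\mathcal{I}_\ell$, hence in the variety of $\langle W_j:j\ge 3\rangle$ by the equality of radicals above, forcing $W_j(a^*)=0$ for all $j$, i.e.\ $f_{a^*}\circ f_{a^*}=\mathrm{Id}$. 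But then the origin is not a weak fixed point of any finite order (it is the ``center'' case), contradicting the standing assumption in the definition. To see that order exactly $\ell-1$ is actually attained, I would invoke the minimality of $\ell$: if $W_j^{n_j}\in\langle V_3,\ldots,V_{2\ell-1}\rangle$ held for every $j$ as well, then $\ell$ would not be minimal; so there is some $j_0$ and some point in the variety of $\langle V_3,\ldots,V_{2\ell-1}\rangle$ off the variety of $W_{j_0}$, hence off the variety of the full ideal, hence with $f\circ f\ne\mathrm{Id}$, hence (by Theorem~\ref{T-orient-rev}) with some $V_{2k+1}\ne0$; by construction of the $V$'s the first such index must be $2\ell+1$, giving a genuine weak fixed point of order $\ell-1$.

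For the statement about maximum cyclicity of $f_a$ with $a\neq\mathbf{0}$: the upper bound $\ell-1$ follows from Proposition~\ref{p:lower} once we know that any such $a^*$ with $a^*\ne\mathbf 0$ is a weak fixed point of order at most $\ell-1$ — which is what the previous paragraph gives, since $a^*\neq\mathbf{0}$ excludes exactly the involution case $f\circ f=\mathrm{Id}$. (Here I use that, by Theorem~\ref{T-orient-rev}, for any non-involution the chain $V_3,V_5,\ldots$ has a first nonzero term, of odd order $2m+1$, and we have just shown $m\le\ell-1$; Proposition~\ref{p:lower} then bounds the cyclicity by $m\le\ell-1$.) The lower bound — that some $f_a$ with $a\neq\mathbf{0}$ achieves cyclicity $\ell-1$ — is the step I expect to be the main obstacle: one must exhibit a point $a^*$ which is a weak fixed point of order exactly $\ell-1$ and at which the $\ell-1$ gradients $\nabla W_3(a^*),\nabla W_5(a^*),\ldots,\nabla W_{2\ell-1}(a^*)$ are linearly independent, so that Proposition~\ref{p:lower2} applies. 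The existence of a weak fixed point of order $\ell-1$ is secured by the minimality argument above; the genericity of the linear independence of the gradients along the variety $\{V_3=\cdots=V_{2\ell-1}=0\}$ near such a point is the delicate part. I would argue it by noting that near a smooth point $a^*$ of that variety where the common zero set has codimension exactly $\ell-1$ (which is forced, since otherwise the variety of $\langle W_j\rangle$ — being contained in it up to radical and cut out by the $\ell-1$ functions $V_3,\dots,V_{2\ell-1}$ near $a^*$ only if the codimension is $\ell-1$ — would behave inconsistently with $\ell$ being minimal), the Jacobian of $(V_3,V_5,\ldots,V_{2\ell-1})$ has full rank $\ell-1$, and the $V$'s and $W$'s agree up to the ideal generated by lower-order constants, so the $W$-gradients span the same space modulo vectors tangent to the variety; choosing $a^*\ne\mathbf 0$ on the appropriate component then finishes the argument.
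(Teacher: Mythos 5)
The part of your argument that the paper actually needs --- the upper bounds --- is correct and is essentially the paper's own proof: if some $a^*\ne\mathbf 0$ had a weak fixed point of order at least $\ell$, then $V_3(a^*)=\cdots=V_{2\ell+1}(a^*)=0$, hence by \eqref{e:Wjnj} every $W_j^{n_j}(a^*)=0$, hence every $W_j(a^*)=0$, so $f_{a^*}\circ f_{a^*}=\operatorname{Id}$ and $a^*=\mathbf 0$, a contradiction; the cyclicity bound then follows from the first part of Proposition~\ref{p:lower}. Note that this is all the paper proves and all it ever uses: ``is $\ell-1$'' in the statement is meant in the same sense as ``the maximum cyclicity of $f_{a^*}$ is $m-1$'' in Proposition~\ref{p:lower}, i.e.\ as an upper bound.

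Where you go beyond this --- trying to show that order $\ell-1$, and cyclicity $\ell-1$ for some $a\ne\mathbf 0$, are actually \emph{attained} --- there are genuine gaps. First, from the minimality of $\ell$ you get some $j_0$ with no power of $W_{j_0}$ in $\langle V_3,\ldots,V_{2\ell-1}\rangle$, and you conclude that there is a point of the variety of that ideal at which $W_{j_0}\ne0$; this step is the Nullstellensatz, which only produces a point of the \emph{complex} variety, whereas a weak fixed point requires a real parameter value (over $\R$ it is the real radical, not the radical, that controls the zero set). Second, your genericity argument for the linear independence of the gradients $\nabla W_3(a^*),\ldots,\nabla W_{2\ell-1}(a^*)$, via a claimed codimension count for the variety of $\langle V_3,\ldots,V_{2\ell-1}\rangle$, is not justified: nothing in the hypotheses forces that variety to be smooth of codimension $\ell-1$ near a suitable real point with $a^*\ne\mathbf 0$. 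Neither gap affects the lemma as it is applied in the paper (where only the upper bounds are invoked, and attainment is always checked by exhibiting explicit examples), but as written your proof asserts more than it establishes.
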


\begin{proof} Assume, to arrive to a contradiction,  that the family has some weak fixed point with
order bigger than $\ell-1$ for some $a=a^*\neq \mathbf{0}.$ In particular, for
this  $a$ it holds that $V_3(a^*)=V_5(a^*)=\cdots=V_{2\ell+1}(a^*)=0.$ By hypotheses,  for any $j\geq 3,$
\[
W_j^{n_j}(a)=\sum_{i=1}^\ell p_{2i+1,j}(a)V_{2i+1}(a),
\]
for some polynomials $ p_{2i+1,j}(a)$. Hence,
$W_j^{n_j}(a^*)=0$  for all $j\geq 3$, giving that $W_j(a^*)=0.$ As a
consequence, $f_{a^*}(x)=-x,$ a contradiction with our initial
assumption.

 Finally, the maximum cyclicity for any map $f_a,$ with
$a\ne\mathbf{0},$ is $\ell-1$ because of the first part of
Proposition \ref{p:lower}.
\end{proof}

We end this list of preliminary results with a remark about the
cyclicity  of  families of orientation preserving  diffeomorphisms.

\begin{nota}\label{P-cic0op}
For any family of maps $f_{a}(x)=x+\sum_{i\geq 2} c_{i}(a) x^i$ with
$a$ in an open set $\mathcal{V}\subseteq \R^{n}$, depending
continuously
 with respect to $a,$ the origin has
$2$\emph{-cyclicity} $0$. This holds because, given any $a=a^*$
there is a neighborhood of the origin and $a^*$ for which $f_a$ is
monotonous increasing.
\end{nota}

\section{Proof of Theorem \ref{T-cicl}}\label{S-prova}


For any $d\geq 2$ the family of maps \eqref{E-poly-fam-a} is a
$(d-1)$-parametric family, with parameters $a=(a_2,\ldots,a_{d})\in
\mathbb{R}^{d-1}$.  As we have already argued in the introduction,
if for some $a,$ $f_a$ has degree $k$ then $f_a\circ f_a$ has degree
$k^2.$ Hence the only involution in the family is the trivial one
$f_{\mathbf{0}}(x)=-x.$ To prove the second assertion of the
statement, notice that $x=0$ is a fixed point of $f_a$ and $f_a\circ
f_a$. Hence, the maximum number of global 2-periodic orbits of a
polynomial map in the family \eqref{E-poly-fam-a} is
$\left[(d^2-1)/2\right]$.

$(a)$  Consider first the case $d=2n,$ even.  We start proving that
its cyclicity is at least $d-2$. In this situation it is very easy
to prove that taking $a^*=(0,0,\ldots,0,1)$ the origin is a weak
fixed point of order $d-2=2(n-1),$ with
$W_{4n-1}(a^*)=V_{4n-1}(a^*)=-2n\ne0,$ because when
$f_{a^*}(x)=-x+x^{2n},$
\begin{align*}
f_{a^*}(f_{a^*}(x))=&x-x^{2n}+\big(-x+x^{2n}\big)^{2n}=x-x^{2n}+x^{2n}\big(1-x^{2n-1}\big)^{2n}\\
=&x-2n x^{4n-1}+O(x^{4n}).
\end{align*}
 To show that  the cyclicity of the map $f_{a^*}$
is $2(n-1)$ we will apply Proposition~\ref{p:lower2}. Therefore we
must prove that the vectors in $\mathcal{W}:=\{\nabla
W_{2k+1}(a^*),$ $k=1,2,\ldots, 2n-2\},$ are linearly independent,
where we recall that $\nabla=(\partial/\partial a_2,
\partial/\partial a_3,\ldots,
\partial/\partial a_{2n-1})$.

Using the quasi-degree properties  of the stability constants
$W_{2k+1}(a)$  proved in Proposition~\ref{L-quasi-homo},
 it is clear that for a general family of maps
\eqref{E-poly-fam-a} with $d=2n$, for any $W_{2k+1}(a),$
$k=1,2,\ldots, n-1,$ the only degree~1 monomial of each of them  is
$\alpha_{2k+1} a_{2k+1}$ for some real constants $\alpha_{2k+1}.$ To
determine these constants notice that when $f(x)=-x+ x^{2k+1}$ then
\[
f(f(x))=x-x^{2k+1}+\big(-x+x^{2k+1}\big)^{2k+1}=x-x^{2k+1}-x^{2k+1}\big(1+
O(x)\big)=x-2x^{2k+1}+O(x^{2k +2}).
\]
Hence, for these values of $k$, $\alpha_{2k+1}=-2.$ In consequence
\begin{equation}\label{nabla1}
\nabla W_{2k+1}(a^*)=(0,0,\ldots,-2,0,\ldots,0) ,\quad k=1,2,\ldots,
n-1,
\end{equation}
where the $-2$ is placed at the $2k$ position of the
$(2n-2)$-dimensional vector, because for these values of $k$ all the
other monomials of $W_{2k+1}(a)$ have degree at least 2, and their
derivatives, evaluated at $a^*$ vanish.

For $k$ from $n$ until $2n-2,$ and due again to the algebraic
property  given in Proposition~\ref{L-quasi-homo}, the corresponding
stability constant $W_{2k+1}(a)$  (again, for a general family of
maps \eqref{E-poly-fam-a} with $d=2n$)
 has no monomials of degree 1. Similarly
 it can have several monomials of degree 2, all
of them of the form $\beta_{s,t} a_{2s}a_{2t},$ for some real values
$\beta_{s,t},$ to be determined, and with $(s,t)\in\N^2,$ both
bigger than 1 and such that $s+t=k+1.$ Because we are only
interested on computing $\nabla W_{2k+1}(a^*),$ the only relevant
monomial of degree 2 in $W_{2k+1}(a)$ will be $\beta_{k+1-n,n}
a_{2(k+1-n)}a_{2n}.$ To obtain these values of $\beta_{k+1-n,n},$
consider $f(x)=-x+x^{2(k+1-n)}+x^{2n}.$ Similar computations than
the ones done above give that this coefficient  in $W_{2k+1}(a)$ is
$-2(k+1).$ Therefore $W_{2k+1}(a)$ has the monomial $-2(k+1)
a_{2(k+1-n)}a_{2n}$ and it holds that
\begin{equation}\label{nabla2}
\nabla W_{2k+1}(a^*)=(0,0,\ldots,-2(k+1),0,\ldots,0),\quad
k=n,n+1,\ldots, 2n-2,
\end{equation}
where the value $-2(k+1)$ is placed at the $2(k-n)+1$ position of
this $(2n-2)$-dimensional vector.

 Joining \eqref{nabla1} and \eqref{nabla2},  we
obtain that the vectors in $\mathcal{W}$ are linearly independent.
Hence we have proved that when $d=2n$ the cyclicity of the whole
family is at least $d-2,$  because for this specific value of
$a=a^*$ it is so.

Now we are going to consider the maps (\ref{E-poly-fam-a}) for small
values of $d.$

\noindent{\bf Case $\mathbf{ d=2}$.} In this simple case
$f_a(x)=-x+a_2\,x^2$ and $f_{a}(
f_a(x))=x-2\,a_2^2\,x^3+a_2^3\,x^4.$ The equation $f_{a}( f_a(x))=x$
only gives the solutions $x=0$ and $x=\frac{2}{a_2}$ which in fact
are fixed points of $f_a.$ Hence $f_a$ has not 2-periodic orbits.

\smallbreak

\noindent{\bf Case $\mathbf{ d=4}$.} In this case $f_a(
f_a(x))=x+V_3\,x^3+\sum_{j=4}^{16} W_j\,x^j$. It is straightforward,
either by hand, or using the Gr\"obner basis package in Maple that
we are under the hypotheses of Proposition \ref{p:upper} with $m=3.$
Hence the cyclicity of the family is  at most 2 and, therefore,
since we have proved that it is at least $d-2=2$, it is exactly 2.
As an example of the computations that we have done, next we give
some details of the first algebraic relations.

In this case \begin{align*}
 V_3=& -2{a_{{2}}}^{2}-2a_{{3}},\quad
  V_5= -6\,a_{{4}}a_{{2}}+4\,{a_{{3}}}^{2},\quad
  V_7=3\,a_{{2}}a_{{3}}a_{{4}}-4\,{a_{{
4}}}^{2},
\end{align*}
and it holds that $V_3=W_3,$
\begin{align*}
W_4=&-\frac{1}{2}\,a_2\,V_3, \quad
W_5=V_5+\frac12\,a_3\,V_3,\\
 W_6=&-\frac{3}{2}\,a_2\,V_5+\frac{1}{2}(a_4-a_2\,a_3)V_3,\\
 W_7=&V_7+\frac34(a_2^2-a_3)V_5-\frac14a_2a_4V_3,
\end{align*}
and $W_j\in\langle V_3,V_5,V_7\rangle,$ for $j=8,9,\ldots,16.$

\smallbreak

\noindent{\bf Case $\mathbf{ d=6}$.} As  when $d=4$, we want to
apply Proposition \ref{p:upper}. In this case we prove that we are
under the hypotheses of this result with $m=6,$ and hence the
cyclicity of the family will be at most $5.$ Indeed, using the
Maple's Gr\"obner basis package again we find that,
$$
W_j\in \langle V_3,V_5,V_7,V_9,V_{11},V_{13}\rangle \mbox{ for } 3\leq j\leq 36.
$$
 Moreover, it also holds that
$$
W_j^2\in \langle V_3,V_5,V_7,V_9,V_{11}\rangle :=\mathcal{I}
\mbox{ for } 3\leq j\leq 36,\mbox{ and also } W_{13}\not\in \mathcal{I},
$$
showing that we are under the hypotheses of Lemma~\ref{l:rad} with
$\ell=5,$ proving that the cyclicity of any map $f_a,$ with $a\neq
\mathbf{0},$ is at most  $\ell-1=4=d-2.$

Notice that the above two relations imply in particular that the
ideal $\mathcal I$ is not radical.

\smallbreak

\noindent{\bf Cases $\mathbf{ d=8,10}$.} Doing similar computations
that when $d=6$ we can apply the same results.

For $d=8$ we get that $m=10$ and $\ell=7$, because,
$$
W_j\in \langle V_3,V_5,\ldots,V_{19},V_{21}\rangle
\mbox{ for } 3\leq j\leq 64
$$
and no similar relation appears before. Moreover,
$$
W_j^2\in \langle V_3,V_5,\ldots,V_{13},V_{15}\rangle
\mbox{ for }  3\leq j\leq 64.
$$
Hence, by Proposition \ref{p:upper} the cyclicity of
$f_{\mathbf{0}}$ is at most 9 and, by Lemma \ref{l:rad} and the fact
that the cyclicity is at least $d-2=6$, we get the desired result.
We remark that for some $W_j$ there is no need to take $W_j^2$ to be
in the ideal, but it is essential for instance for $W_{17}.$

For $d=10,$  $m=14$ and $\ell=9$, we prove that the cyclicity of
$f_{\mathbf{0}}$ is at most 13 and that the cyclicity of
 any $f_a,$ for $a\neq \mathbf{0},$ is once again
$d-2=8.$ We remark that in this case it happens that
$$
W_j\in \langle V_3,V_5,\ldots,V_{27},V_{29}\rangle
\mbox{ for } 3\leq j\leq 100,
$$
without similar relations appearing before, and
$$
W_j^3\in \langle V_3,V_5,\ldots,V_{17},V_{19}\rangle:=\mathcal{I}
\mbox{ for } 3\leq j\leq 100.
$$
We remark that not all $W_j$ need the exponent 3 to be in
$\mathcal{I}.$ Nevertheless, for instance, neither $W_{21}$ nor
$W_{21}^2$ are in $\mathcal{I}.$

\smallbreak

$(b)$ When $d=2n+1$ is odd it is clear that the cyclicity of the
family is at least the one to the case of degree $2n$, that we have
proved that it is at least $2n-2.$ Hence it is at least $d-3.$

Now we are going to consider the cases $d=3,5,7,9.$

\medskip

\noindent{\bf Case $\mathbf{d=3}$.} Doing similar computations that
the ones of case $d=4$ we get that we are under the hypotheses of
Proposition \ref{p:upper} with $m=2.$ Hence an upper bound of the
cyclicity of this family is 1. To prove that this bound is attained
we take  $a^*=(a_{2}^*,a_{3}^*)=(1,-1).$ Then $V_3(a^*)=0$ and
$V_5(a^*)=4>0$. Since  $V_3(a)=-2{a_{{2}}}^{2}-2a_{{3}},$ it holds
that
\[
\nabla V_3(a^*)=\left.\frac{\partial}{\partial a_2}
V_3(a)\right|_{a^*=(1,-1)}= -4\ne0.
\]
Therefore the cyclicity of the map $f_{a^*}$ is 1, and so it is the
cyclicity of the family.

\noindent{\bf Case $\mathbf{ d=5}$.} Proceeding as in case $d=6,$
first we will get some upper bounds of the cyclicity. In fact we can
apply Lemma~\ref{l:rad} with $\ell=4,$ proving that the cyclicity of
any map $f_a,$ with $a\neq \mathbf{0},$ is at most $\ell-1=3=d-2,$ and
Proposition \ref{p:upper} with $m=5,$ showing that the cyclicity of
the family is at most $4.$ Next we present one example with cyclicity 3.

 By solving the system $\{V_3(a)=V_5(a)=V_7(a)=0\}$
with respect $a_2,\ldots,a_5$, and by direct inspection of its
 solutions, we obtain that taking
$a^*=(1,-1,(9+\sqrt{55})/2,-(23+3\sqrt{55})/2)$ it holds that
$$
V_3(a^*)=V_5(a^*)=V_7(a^*)=0,\, V_9(a^*)=1701+229\,\sqrt {55}>0.
$$
A computation shows that
$$
\det\left({\nabla} V_3(a^*), {\nabla V_5}(a^*)
{\nabla} V_7(a^*)\right)=\det
 \left(\begin {array}{ccc} -4&-27-3\,\sqrt {55}&-{\frac{1}{2}}\left(27-3\,
\sqrt {55}\right)\\ -2&-8&-136-18\,\sqrt {55}
\\ 0&-6&-39-4\,\sqrt {55}\end {array} \right)$$
$$
=5280+736\,\sqrt {55},
$$
where $\nabla=(\partial/\partial a_2,
\partial/\partial a_3,
\partial/\partial a_4)$. So, the three vectors ${\nabla} V_3(a^*), {\nabla V_5}(a^*)$ and
${\nabla} V_7(a^*),$ are linearly independent and therefore, by  Proposition~\ref{p:lower}, the
cyclicity of $f_{a^*}$ is exactly 3.

\smallbreak

\noindent{\bf Case $\mathbf{ d=7}$.} We start proving that the cyclicity is at least
 five by finding an example with this cyclicity.  Proceeding as in the above case,
or looking at the proof of item (iv), we find that taking
$a^*=(0,0,1,0,0,-2),$  we obtain that
$V_3(a^*)=V_5(a^*)=V_7(a^*)=V_9(a^*)=V_{11}(a^*)=0$ and $V_{13}=42$.
A computation gives
$$
\det\left(\nabla V_3(a^*),\nabla V_5(a^*),\nabla V_7(a^*),\nabla
V_9(a^*),\nabla V_{11}(a^*)\right)=\det \left( \begin {array}{ccccc}
0&-6&0&0&{\frac {11765}{121}}
\\ \noalign{\medskip}-2&0&0&-{\frac {716}{17}}&0\\ \noalign{\medskip}0
&0&-8&0&0\\ \noalign{\medskip}0&-2&0&0&-{\frac {515}{121}}
\\ \noalign{\medskip}0&0&0&-10&0\end {array} \right)$$
$$=-35200.
$$
 Hence the cyclicity of $f_{a^*}$ is 5, as desired.
Finally, using Maple again we get that
$$W_j\in \langle V_3,V_5,\ldots,V_{17}\rangle \mbox{ for } 3\le j\le 48$$ and
$$W_j^2\in \langle V_3,V_5,\ldots,V_{13}\rangle \mbox{ for } 3\le j\le 48.$$ Hence the
cyclicity of $f_{\mathbf{0}}$ is at most 7 and the cyclicity of
 $f_a,$ for any $a\neq \mathbf{0}$ is  at most
$d-2=5.$

\smallbreak

\noindent{\bf Case $\mathbf{ d=9}$.} Once again, some computations using the Maple's
Gr\"obner basis package give
$$W_j\in \langle V_3,V_5,\ldots,V_{23}\rangle \mbox{ for } 3\le j\le 81$$ and
$$W_j^3\in \langle V_3,V_5,\ldots,V_{17}\rangle \mbox{ for } 3\le j\le 81.$$ Hence
the cyclicity of $f_{\mathbf{0}}$ is at most 10 and the cyclicity of
$f_a,$  for any $a\neq \mathbf{0}$ is less or equal to $d-2=7.$ To
end the proof  we prove that there is a value of $a\neq \mathbf{0}$
such that the cyclicity at this value is~7.

 In this case,  the study of the solutions of the system of equations
described by the first five reduced stability constants is
complicated. So we will propose an alternative method for obtaining
weak fixed points of high order. This method is based on the
knowledge of the structure  of 1-dimensional involutions.

It is know that any analytic 1-dimensional non-trivial involution
$h$ can be written as
\[
h(x)=g(-g^{-1}(x)),
\]
where $g$ is an analytic diffeomorphism such that $g(0)=0,$ see
\cite{K}. Notice that it is straightforward to check that
$g(-g^{-1})$ is an involution.  Take any map of the form
\[
g(x)=x+\sum\limits_{j\geq 2} b_j x^j
\]
and compute the Taylor series of its inverse,
\[
g^{-1}(x)=x-b_2x^2+(2b_2^2-b_3)x^3+(-5b_2^3+5b_2b_3-b_4)x^4+\sum\limits_{j\geq
5} D_j(b) x^j,
\]
where $b=(b_2,b_3,\ldots)$ and we do not detail the polynomial
functions $D_j,$  that are given by the so called Bell polynomials.
It holds that $h\circ h=\operatorname{Id}.$

 Now, to find a map with a weak fixed point of high order, we can fix some
degree $d,$ and  consider the Taylor approximation of $h$ of
degree $d$, at the origin, $h_d=T_d(h)$. Then
\[
h_d(x)=-x+\sum_{j=2}^d B_j(b)x^j,
\]
where
\begin{equation}\label{e:Bs}
\begin{array}{l}
B_2(b)=2b_2,\,B_3(b)=-4b_2^2,\,B_4(b)=10b_2^3-4b_2b_3+2b_4,\\
B_5(b)=-28b_2^4+24b_2^2b_3-12b_2b_4,
\end{array}
\end{equation}
and $B_j(b),$ for $j=6,\ldots,d$ are some polynomials that we do not detail. This map has a
high order weak fixed point at the origin. For instance when $d=9,$
it holds that
\begin{equation*}
h_9(h_9(x))=x+W_{11}(b)x^{11}+\sum_{j=12}^{81} W_j(b)x^j.
\end{equation*}
Now, to increase the level of weakness of the fixed point, that is the order of $h_9$, we have to
select the values of $b$ such that the associated stability constants up to order 15 vanish, i.e.
\begin{equation}
\begin{cases}W_{11}(b)=W_{13}(b)=W_{15}(b)=0,\end{cases}
\end{equation}
where we omit the expression of these stability constants. Since
$W_{11}(b)$ linear with respect $b_7$ we can isolate and fix this parameter, obtaining
$$\begin{array}{rl}
b_7:=&\dfrac {1}{4b_2 \left( 107\,b_2^{3}+6\,b_2b_3-3
\,b_4 \right)} \,\left(20774b_2^{10}-64272b_2^{8}b_3+32136b_2^{7}
b_4+52962b_2^{6}b_3^{2}\right.\\
&-7644b_2^{6}b_5-
41496b_2^{5}b_3b_4-9464b_2^{4}b_3^{3}+
3822b_2^{5}b_6+4836b_2^{4}b_3b_5+6552{b_
{{2}}}^{4}b_4^{2}\\
&+6942b_2^{3}b_3^{2}b_4-507{
b_2}^{2}b_3^{4}-1776b_2^{3}b_3b_6-1348b_2^{3}b_4b_5+300b_2^{2}b_3^{2}b_5-684b_2^{2}b_3b_4^{2}\\
&+564b_2b_3^{3}b_4+214
b_2^{3}b_8+246b_2^{2}b_4b_6-12b_2^{2
}b_5^{2}-114b_2b_3^{2}b_6-204b_2b_3b_{
{4}}b_5-50b_2b_4^{3}\\
&\left.-156b_3^{2}b_4^{2}+
12b_2b_3b_8+12b_2b_5b_6+60b_3b_4
b_6+24b_4^{2}b_5-6b_4b_8-3b_6^{2}
\right).
\end{array}
$$
To reduce the number of parameters we impose $b_2=1$ and $b_3=0$, and solve the system
$\{W_{13}(b)=W_{15}(b)=0\}$ using the Maple algebra software, obtaining the following solution,
among others:
$b_4=\xi,$ $b_5$ as a free parameter and $b_{{6}}=n(b_5,\xi)/d(b_5,\xi)$
where
$$\begin{array}{rl}
n(b_5,\xi)=&-4830249480\xi^{9}+78255450\xi^{8}b_5-309996323910\xi^
{8}+121885399860\xi^{7}b_5\\
&+499588480916\xi^{7}-
3569620983180\xi^{6}b_5+433844538538740\xi^{6}\\
&-
3036308656220\xi^{5}b_5+10120115599755700\xi^{5}-
1400107036991768\xi^{4}b_5\\
&+78554454691772584\xi^{4}-
16364417170088484\xi^{3}b_5+278979787186921660\xi^{3}\\
&-
60913553653703380\xi^{2}b_5+434487144164761772\xi^{2}-
150424031357777588\xi b_5\\
&-1476344096012712444\xi+
253882004776386078b_5-1551280344412627458,\\
d(b_5,\xi)=&39127725{\xi}^{8}+60942699930{\xi}^{7}-1784810491590{\xi}^{6}-
1518154328110{\xi}^{5}\\
&-700053518495884{\xi}^{4}-8182208585044242
{\xi}^{3}-30456776826851690{\xi}^{2}\\
&-75212015678888794\xi+
126941002388193039,
\end{array}
$$
and where $\xi$ is any real root of the polynomial
$$\begin{array}{rl}
P(x)=&160228033875{x}^{16}+221432009870400{x}^{15}+13936473199884004{x
}^{14}\\
&-683923454204391464{x}^{13}+2642995488208403832{x}^{12}\\
&-
385227003687957189136{x}^{11}-3012116857431809290604{x}^{10}\\
&+
45026084431427989413608{x}^{9}+752080887518088204729142{x}^{8}\\
&+
5032896522827017198516064{x}^{7}+17779108732214526516315308{x}^{6}\\
&+29817171191523879926181416{x}^{5}-14212793325606052484090592{x}^{
4}\\
&-123365732211297823524968592{x}^{3}-274115367296634168846158244{
x}^{2}\\
&-325682563327763246441199080x-133940574254498343421555617.
\end{array}$$
Notice that, using the Sturm algorithm, one can check that $P(x)$
has 8 different simple real roots.

Finally we set $b_5=0$. With this choice of the parameters each constant $W_j(b)$ writes as
$W_j(\xi)$. A computation shows that for $j=11,\ldots,16$:
$$\begin{array}{l}
\mathrm{Resultant}\left(P(\xi),\mathrm{Numer}\left(W_j(\xi)\right);\xi\right)=0,\\
\mathrm{Resultant}\left(P(\xi),\mathrm{Denom}\left(W_j(\xi)\right);\xi\right)\neq0,
\end{array}
$$
and
$$
\begin{array}{l}
\mathrm{Resultant}\left(P(\xi),\mathrm{Numer}\left(W_{17}(\xi)\right);\xi\right)\neq 0,\\
\mathrm{Resultant}\left(P(\xi),\mathrm{Denom}\left(W_{17}(\xi)\right);\xi\right)\neq 0.
\end{array}
$$
 Hence, when $x=\xi^*$ is any of the real roots of
$P(x)$ the map $h_9$ has a weak fixed point of order $7$. Now we
prove that it has cyclicity 7. Using the expressions of the
 functions $B_j$ (see \eqref{e:Bs})  we set
$a_j^*=B_j(\xi^*)$ for $j=2,\ldots 9$ and take
$a^*=(a_2^*,\ldots,a_9^*)$. A computation gives that
$$
\det\left(\nabla V_3(a^*),\ldots,\nabla V_{15}(a^*)\right)=\dfrac{R(\xi)}{Q(\xi)},
$$
where $R$ and $Q$ are co-prime polynomials with degree 77 and 68 respectively in $\xi$.
Again, one can check that
$\mathrm{Resultant}\left(P(\xi),R(\xi);\xi\right)\neq 0,$ and
$\mathrm{Resultant}\left(P(\xi),Q(\xi);\xi\right)\neq 0,$
hence the vectors $\nabla V_3(a^*),\ldots,\nabla V_{15}(a^*)$ are
linearly independent and, by Proposition \ref{p:lower}, the
cyclicity of $f_{a^*}$ is at least $7$. This ends the proof of
statements (b) (i)--(iii).


\medskip

 To prove statement (iv), we consider for $d=4m+3$:
$$f_{a^*}(x)=-x+x^{2m+2}-(m+1)\,x^{4m+3}.$$
A routine computation shows that
$$f_{a^*}(f_{a^*}(x))=x+\frac{(m+1)(5m+4)(4m+3)}{3}\,x^{8m+5}+O(x^{8m+6}).$$
 Hence $f_{a^*}$ has a weak fixed point of order $d-2=4m+1$ as we wanted to
 show.

\section{Poincar\'e maps and $2$-cyclicity}\label{SS-bifurcation}

Locally orientation reversing diffeomorfisms appear naturally when studying
\emph{the Poincar\'e maps} associated to the origin of planar differential systems  of the form
\begin{equation}\label{E-vf}
\begin{cases}
\begin{array}{l}
\dot{x}=-y+P(x,y),\\
\dot{y}=\,\,x+Q(x,y),
\end{array}
\end{cases}
\end{equation}
where $P$ and $Q$ are analytic functions starting with at least second order terms.
It is well known that the origin of the above system is \emph{monodromic}, i.e. there is a well defined
associated Poincar\'e map. In this situation, using polar coordinates $r^2=x^2+y^2$
and $\theta=\arctan(y/x)$ the solution of \eqref{E-vf} that passes through the point
$(x,0)$ with $x>0$ small enough can be expressed by
$r(\theta;x)=x+\sum_{i\geq 2} a_i(\theta) x^i$, and the Poincar\'e map is given by $\Pi(x)=r(2\pi;x)$.

Let $\Pi_+(x)$ be the map defined over an interval $(0,\epsilon)\subset \R^+$ given by
$\Pi_+:(0,\epsilon)\rightarrow \R^-$ where $(0,\epsilon)$ is on the semi-axis $OX^+$,
such that it gives the first intersection, in positive time, of the orbit that at time
$t=0$ passes through the point $(x,0)$. We call this map the \emph{half-return map}.
 In \cite{CGP} it is proved that
$\Pi_+(x)=-r(\pi;x)$, hence it is of the form \eqref{E-maps}. As can
be seen in this reference, $\Pi_+(x)$ has an analytic extension to
$\mathbb{R}$.
 Using this analytic extension,
the authors prove  that $\Pi=\Pi_+\circ \Pi_+$.

 It is clear, then, that given a parametric family of vector fields of the form
$$
X_a(x,y)=\left(-y+P_a(x,y)\right)\frac{\partial}{\partial x}+
\left(x+Q_a(x,y)\right)\frac{\partial}{\partial y},
$$ with $a\in\R^n$ and $P_a(x,y)$ and $Q_a(x,y)$ starting with second order terms,
the cyclicity of $X_a$ (that is, the number of small amplitude limit
cycles of the differential equation associated to $X_a$) is exactly
the cyclicity of the associated family of maps $\Pi_{+,a}(x)$ (the
number of $2$-periodic orbits). Conversely, observe that
 the following result proves that any given map of the form \eqref{E-maps}
 is conjugate with the corresponding half-return map of a polynomial vector field.

\begin{propo}\label{P-vf}
Given an analytic map with $f(0)=0$ and $f'(0)=-1$, there exists a polynomial
vector field of the form \eqref{E-vf} such that $f(x)$ is locally
$\mathcal{C}^\infty$-conjugate to the half-return map $\Pi_+(x)$ of the vector field.
\end{propo}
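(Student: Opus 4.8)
The plan is to look for the desired vector field within the special family
\begin{equation*}
\dot x=-y+x\,h(x,y),\qquad \dot y=x+y\,h(x,y),
\end{equation*}
where $h$ is a polynomial with $h(0,0)=0$, so that $P=xh$ and $Q=yh$ do start with second order terms. In polar coordinates this system becomes $\dot r=r\,h(r\cos\theta,r\sin\theta)$ and $\dot\theta=1$, hence $\theta=t$, and using the identity $\Pi_+(x)=-r(\pi;x)$ from \cite{CGP} the half-return map is completely determined by the scalar problem
\begin{equation*}
\frac{dr}{d\theta}=r\,h(r\cos\theta,r\sin\theta),\qquad r(0;x)=x.
\end{equation*}
Since $h$ vanishes at the origin one has $r(\pi;x)=x+O(x^2)$, so $\Pi_+$ is again an analytic orientation reversing diffeomorphism with $\Pi_+'(0)=-1$; the problem is thus reduced to choosing the polynomial $h$ so that this $\Pi_+$ becomes $\mathcal{C}^\infty$-conjugate to $f$.

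If $f$ is an involution I would simply take $h\equiv0$, i.e. the linear center, whose half-return map is exactly $\Pi_+(x)=-x$. By the representation $f(x)=g(-g^{-1}(x))$ of analytic involutions recalled earlier (see \cite{K}), $f$ is analytically, in particular $\mathcal{C}^\infty$, conjugate to $x\mapsto-x$, and we are done in this case. So assume from now on that $f\circ f\ne\operatorname{Id}$.

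Write $h=\sum_{j\ge1}h_j$ with $h_j$ homogeneous of degree $j$, and $r(\theta;x)=x+\sum_{i\ge2}a_i(\theta)x^i$. Substituting into the scalar equation one obtains $a_i(0)=0$ together with a triangular recursion in which $a_i'(\theta)$ equals $h_{i-1}(\cos\theta,\sin\theta)$ plus an expression depending only on $h_1,\dots,h_{i-2}$ and $a_2,\dots,a_{i-1}$. Consequently $a_i(\pi)$ is an affine function of the finitely many coefficients of $h_{i-1}$, whose linear part is $\int_0^\pi h_{i-1}(\cos\theta,\sin\theta)\,d\theta$. Since for every $i$ there is a monomial $\cos^a\theta\sin^b\theta$ with $a+b=i-1$ whose integral over $[0,\pi]$ is nonzero, this linear part is not identically zero, so $h_{i-1}$ can be solved for order by order; in this way, for any prescribed $N$, a polynomial $h$ of degree $N-1$ can be chosen so that the $N$-jet of $\Pi_+$ at the origin coincides with the $N$-jet of $f$.

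It remains to fix $N$ large enough that coincidence of the $N$-jets forces $\mathcal{C}^\infty$-conjugacy, and this finite-determinacy step is where the real difficulty lies. By Theorem \ref{T-orient-rev} there is $\ell\ge1$ with $f\circ f(x)=x+V_{2\ell+1}\,x^{2\ell+1}+O(x^{2\ell+2})$ and $V_{2\ell+1}\ne0$, so $f\circ f$ is a parabolic germ of order $2\ell+1$; together with $f$ it then lies in a formal normal-form family governed by finitely many invariants, all extracted from a jet of some order $N=N(\ell)$ that can be bounded explicitly. For such $N$ the map $\Pi_+$ constructed above has the same $N$-jet as $f$ (and in particular $\Pi_+\circ\Pi_+$ is parabolic of order exactly $2\ell+1$, not the identity), so $\Pi_+$ and $f$ are formally conjugate. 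Finally, for one-dimensional germs of this type formal conjugacy implies $\mathcal{C}^\infty$-conjugacy, through the classical construction of sectorial normalizing maps (Fatou coordinates) patched together with a $\mathcal{C}^\infty$ partition of unity; applying this to $f$ and $\Pi_+$ yields the required polynomial vector field.
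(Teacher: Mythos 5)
Your argument is correct, and it rests on the same external pillar as the paper's proof --- Takens' $\mathcal{C}^\infty$ classification of orientation-reversing germs with derivative $-1$ \cite{T} --- but you deploy it in a genuinely different way. The paper first conjugates $f$ to its normal form $f_N(x)=-x+\sigma x^{2\ell+1}+cx^{4\ell+1}$ and then exhibits, in closed form, the two-parameter radial field \eqref{E-vf-2} whose half-return map realizes exactly that normal form (with $\delta,\gamma$ solved explicitly in terms of $\sigma,c,\ell$); no jet interpolation is needed and the degree of the vector field is controlled by $4\ell+1$. You instead interpolate inside the rotational family $\dot x=-y+xh$, $\dot y=x+yh$, solving the triangular recursion order by order so that $\Pi_+$ matches a high jet of $f$ itself, and only then invoke finite determinacy. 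Your interpolation step is sound: the linear functional $h_{i-1}\mapsto\int_0^\pi h_{i-1}(\cos\theta,\sin\theta)\,d\theta$ is nonzero (e.g.\ on $\sin^{i-1}\theta$), so the system is solvable at every order; this buys the flexibility of matching an arbitrary prescribed jet, at the price of an implicit, non-explicit vector field. The one point to tighten is the determinacy step: the claim that coincidence of the $(4\ell+1)$-jets forces $\mathcal{C}^\infty$-conjugacy (equivalently, that $\ell$, $\sigma$ and $c$ are a complete set of $\mathcal{C}^\infty$ invariants) is precisely Takens' theorem and should be cited as such rather than re-derived; your sketch via Fatou coordinates and a partition of unity describes the sectorial holomorphic theory, whereas the real $\mathcal{C}^\infty$ statement is proved by embedding in the flow of a normal-form vector field, so as written that sentence is an appeal to folklore rather than a proof. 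Your handling of the involution case through the representation $f=g\circ(-\operatorname{Id})\circ g^{-1}$ of \cite{K} is a harmless variant of the paper's use of the Bochner linearization theorem.
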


\begin{proof}  Suppose that $f$ is an involution. By the Bochner
linearization Theorem \cite{MZ},  the local diffeomorphism
  $\psi(x)= x-f(x)$ conjugates $f$ with the linear map $L(x)=-x$ (it is
  straightforward to check that $\psi\circ f=L\circ \psi$). Hence, $f$ is
  analytically conjugate with the  half-return map of a the linear center
$$\begin{cases}
\begin{array}{l}
\dot{x}=-y,\\
\dot{y}=x.
\end{array}
\end{cases}$$

Suppose now, that $f$ is not an involution. Following \cite{T},
there exists a local $\mathcal{C}^\infty$-diffeomorphism
$\varphi_1$, that conjugates $f$ with its
$\mathcal{C}^\infty$-normal form
$$
f_{N}(x)=-x+\sigma x^{2\ell+1}+c x^{4\ell+1},
$$
where $\sigma=\pm 1$.

Consider the polynomial vector field given by
\begin{equation}\label{E-vf-2}
\begin{cases}
\begin{array}{l}
\dot{x}=-y+x\left(\delta (x^2+y^2)^{2\ell}+\gamma (x^2+y^2)^{4\ell}\right),\\
\dot{y}=x+y\left(\delta (x^2+y^2)^{2\ell}+\gamma (x^2+y^2)^{4\ell}\right),
\end{array}
\end{cases}
\end{equation} with $\delta=-\sigma/\pi$ and $\gamma=-(c+(2\ell+1)\sigma^2/2)/\pi$.
We claim that, using the notation introduced above,
\begin{equation}\label{E-pi-mes-r}
\Pi_+(x)=-r(\pi;x)=-x+\sigma x^{2\ell+1}+c
x^{4\ell+1}+O(x^{4\ell+2}),
\end{equation}
and therefore, there exists a $\mathcal{C}^\infty$-diffeomorphism $\varphi_2$,
that conjugates $\Pi_+$ with $f_{N}$. In consequence,
$$
f_N=\varphi_1^{-1}\circ f \circ\varphi_1\,\mbox{ and }\,
f_N=\varphi_2^{-1}\circ \Pi_+ \circ \varphi_2,
$$
so
$$
f=(\varphi_1\circ\varphi_2^{-1})\circ \Pi_+ \circ (\varphi_2\circ
\varphi_1^{-1})
$$
and $f$ is conjugate with $\Pi_+$. To finish, we prove \eqref{E-pi-mes-r}. We apply
similar arguments than the ones used in the proof of Lemma 2.7 in
\cite{T}.

\medskip

Observe that the system \eqref{E-vf-2} has the associated polar equation
\begin{equation}\label{E-polar}
\dot{r}=\delta r^{2\ell+1}+\gamma r^{4\ell+1},
\end{equation}
with analytic solution
$
r(\theta;x)=\sum_{i\geq 1} a_i(\theta) x^i.
$
By substituting this expression in \eqref{E-polar}, taking into account that $r(0;x)=x$, and
comparing powers we obtain that $a_i'(\theta)=0$ for all $i=1,\ldots,2\ell$, so
$a_1(\theta)\equiv 1$ and $a_i(\theta)\equiv 0$ for all $i=2,\ldots,2\ell$. Applying the
same argument we have
$$\begin{array}{rl}
\sum_{i\geq 2\ell+1}a_i'(\theta)x^i&=\delta\left(x+\sum_{i\geq 2\ell+1}a_i(\theta)x^i\right)^{2\ell+1}+
\gamma\left(x+\sum_{i\geq 2\ell+1}a_i(\theta)x^i\right)^{4\ell+1}\\
&\\
&=\delta
x^{2\ell+1}+\left(\delta(2\ell+1)a_{2\ell+1}(\theta)+\gamma\right)x^{4\ell+1}+O(x^{4\ell+2}).
\end{array}
$$
Hence, comparing powers, integrating term by term, and using again
that $r(0;x)=x$ we have that $a_{2\ell+1}(\theta)=\delta \, \theta$,
$a_i(\theta)\equiv 0$ for all $i=2\ell+2,\ldots,4\ell$, and
$a_{4\ell+1}(\theta)=\gamma \,\theta+\delta^2(2\ell+1)\theta^2/2$.
The result follows using that $\delta=-\sigma/\pi$ and
$\gamma=-(c+(2\ell+1)\sigma^2/2)/\pi$.~\end{proof}

 The result above establishes that each map \eqref{E-maps} is conjugate to the
 half-return map of a polynomial vector field,
but we remark that given a map  \eqref{E-maps} it is not easy to prove that \emph{it is} the
 half-return map of a polynomial vector field.

\medskip

We end the paper showing that there exist families of type
(\ref{E-mapsaestrella}) with a single parameter having cyclicity $k$
for any $k\in\N.$ This is a consequence of the results in \cite{GG}.
Indeed, in this reference it is shown that for any $k\in\N$ there
exists a suitable choice of fixed values of
$\alpha_1,\ldots,\alpha_k\in \R$, such that the one-parametric
family of vector fields with associated differential system
$$
\begin{cases}
\begin{array}{l}
\dot{x}=-y+x(x^2+y^2)\left(a^k+\alpha_1 a^{k-1} r^2+\cdots+\alpha_{k-1} a r^{2(k-1)}+\alpha_k r^{2k}\right),\\
\dot{y}=x+y(x^2+y^2)\left(a^k+\alpha_1 a^{k-1} r^2+\cdots+\alpha_{k-1} a r^{2(k-1)}+\alpha_k r^{2k}\right),
\end{array}
\end{cases}
$$
with $r^2=x^2+y^2$, has cyclicity $k$ and, in consequence
the one-parametric family of locally orientation reversing analytic
diffeomeorphisms $\Pi_{+,a}$ also has cyclicity $k$.


\begin{thebibliography}{12}

\bibitem{ChJ}  C.~Chicone, M.~Jacobs. \textsl{Bifurcation of critical
periods for plane vector fields.} Trans. Amer. Math. Soc. 312 (1989),
433--486.

\bibitem{CGM97} A.~Cima, A.~Gasull, V.~Ma\~{n}osa, F.~Ma\~{n}osas.
\textsl{Algebraic properties of the Liapunov and period constants.} Rocky Mountain J.
Math. 27 (1997),  471--501.

\bibitem{CGM17} A.~Cima, A.~Gasull, V.~Ma\~{n}osa.
\textsl{Parrondo's dynamic paradox for the stability of non-hyperbolic fixed
points.} arXiv:1701.05816 [math.DS]


\bibitem{CGP} B.~Coll, A.~Gasull, R.~Prohens. \textsl{Center-focus and isochronous center
problems for discontinuous differential equations}. Discrete and Continuous Dynamical
 Systems 6 (2000), 609--624.

\bibitem{CLS} D.~Cox, J.~Little, D.~O'Shea. Ideals, Varieties and Algorithms, 3rd ed.
Springer, New York, 2007.


\bibitem{DEP} F.M.~Dannan, S.N.~Elaydi, V.~Ponomarenko.
\textsl{Stability of hyperbolic and nonhyperbolic fixed points of
one-dimensional maps} J. Difference Equations and Appl. 9 (2003),
449--457.

\bibitem{GG} A.~Gasull, J.~Gin\'e.  \textsl{Cyclicity versus center problem.} Qual.
Theory Dyn. Syst. 9 (2010),  101--113.

\bibitem{GT} A.~Gasull, J.~Torregrosa.
\textsl{A new approach to the computation of the Lyapunov constants.} Comput. Appl.
 Math. 20 (2001),  149--177.

\bibitem{GLT} A. Gasull, T. L\'{a}zaro, J. Torregrosa. \textsl{On the Chebyshev property for a
 new family of functions.}
J. Math. Anal. Appl. 387 (2012) 631--644.

\bibitem{gr}  R. C. Gunning, H.  Rossi. Analytic functions of several
complex variables. Reprint of the 1965 original. AMS Chelsea Publishing, Providence, RI, 2009.

\bibitem{Il} Y.~Ilyashenko.
\textsl{Centennial history of Hilbert's 16th problem.}
Bull. Amer. Math. Soc. 39 (2002),  301--354.

\bibitem{Johnson} W.P.~Johnson. \textsl{The curious history of Fa\`a di Bruno's formula.}
Amer. Math. Monthly 109 (2002), 217--234.

\bibitem{K} M. Kuczma, B. Choczewski, R. Ger, Iterative
Functional Equations, Encyclopedia Math. Appl., vol. 32, Cambridge University Press, Cambridge, 1990.


\bibitem{LT} H.~Liang, J.~Torregrosa.
\textsl{Parallelization of the Lyapunov constants and cyclicity for
centers of planar polynomial vector fields.}  J. Differential
Equations 259 (2015), 6494--6509.

\bibitem{MZ} D.~Montgomery, L.~Zippin. Topological Transformation Groups.
Interscience, New York 1955.

\bibitem{R} R.~Roussarie. Bifurcations of Planar Vector Fields and
Hilbert's Sixteenth Problem. Birkh\"auser, Basel 1998.





\bibitem{T}  F.~Takens. \textsl{Normal forms for certain singularities of vector fields.}
Annales Inst. Fourier 23 (1973), 163--195.

\bibitem{Z} C.~Zuppa. \textsl{Order of cyclicity of the singular point of Li\'enard's
polynomial vector fields.} Bol. Soc. Brasil. Mat. 12 (1981),
105--111.
\end{thebibliography}
\end{document}